\title[Projection Decomposition]{Projection decomposition
in multiplier algebras}
\author{Victor Kaftal}
\address{Department of Mathematics\\
University of Cincinnati\\
P. O. Box 210025\\
Cincinnati, OH\\
45221-0025\\
USA}
\email{victor.kaftal@UC.Edu}
\author{P. W. Ng}
\address{Department of Mathematics\\
University of Louisiana\\
217 Maxim D. Doucet Hall\\
P.O. Box 41010\\
Lafayette, Louisiana\\
70504-1010\\
USA}
\email{png@louisiana.edu}
\author{Shuang Zhang}
\address{Department of Mathematics\\
University of Cincinnati\\
P.O. Box 210025\\
Cincinnati, OH\\
45221-0025\\
USA}
\email{zhangs@email.uc.edu}
\newtheorem{thm}{Theorem}[section]
\newtheorem{lem}[thm]{Lemma}
\newtheorem{cor}[thm]{Corollary}
\newtheorem{prop}[thm]{Proposition}
\newtheorem{rem}[thm]{Remark}
\newcommand{\B}{\mathscr{B}}
\newcommand{\A}{\mathscr{A}}
\newcommand{\C}{\mathscr C}
\newcommand{\h}{\mathscr H}
\newcommand{\M}{\mathbb M}
\newcommand{\K}{\mathscr K}
\newcommand{\Mul}{\mathscr M}
\newcommand{\tr}{\text{tr}}
\def\sideremark#1{\ifvmode\leavevmode\fi\vadjust{\vbox to0pt{\vss
\hbox to 0pt{\hskip\hsize\hskip1em
\vbox{\hsize2cm\tiny\raggedright\pretolerance10000
\noindent#1\hfill}\hss}\vbox to8pt{\vfil}\vss}}}
\def \bib(#1;#2;#3;#4;#5;#6)  {{#1},{\it #2} {#3},
{\bf#4} (#5) {#6}\par\smallskip}
\begin{document}

\begin{abstract} In this paper we present new structural information
 about the multiplier algebra $\Mul (\A )$ of a $\sigma$-unital purely
infinite simple $C^*$-algebra $\A $, by characterizing the positive elements $A\in \Mul (\A )$
 that are strict sums of projections belonging to $\A $.  If $A\not\in \A$ and $A$ itself is
  not a projection, then the necessary and sufficient condition for $A$ to be a strict sum of
   projections belonging to $\A $ is that $\| A\|>1$ and that the essential norm $\| A\|_{ess}\ge 1$.

Based on a generalization of the Perera-Rordam weak divisibility of
separable simple $C^*$-algebras of real rank zero to all
$\sigma$-unital simple $C^*$-algebras of real rank zero, we show that
every positive element of $\A $ with norm  greater than 1 can be
approximated by finite sums of projections. Based on block
tri-diagonal approximations, we decompose   any positive  element
$A\in \Mul (\A )$ with $\| A\|>1$ and  $\| A\|_{ess}\ge 1$ into a
strictly converging sum of positive elements in $\A$ with norm
greater than 1.
\end{abstract}

\maketitle

\section{\bf Introduction and the main result }
\setcounter{footnote}{100}

 In \cite{Fillmore} Fillmore  raised the
following question:  Which positive bounded operators  on a
separable Hilbert  space  $\h$ can be written as (finite) sums of
projections?  Fillmore obtained a characterization of the finite
rank operators that are sums of projections (see \cite{Fillmore}
Theorem 1) and of the bounded operators that are  the
sums of two projections (see \cite{Fillmore} Theorem 2).

     For infinite sums of projections  with convergence in the strong
operator topology, this question arose naturally from work on frame
theory by Dykema, Freeman, Kornelson, Larson, Ordower and Weber (see
\cite{Larsonetal}).  They proved that a sufficient condition for a
positive bounded operator $A \in \mathbb{B}(\h)$ to be a (possibly
infinite) sum of projections converging in the strong operator
topology is that its essential norm $\| A \|_{ess}$ is greater than
$1$ (see \cite{Larsonetal} Theorem 2).   This result served as a
basis for further work by Kornelson and Larson \cite{Larson1} and
then by Antezana, Massey, Ruiz and Stojanoff \cite{AMRS} on
decompositions of positive operators into strongly converging sums
of rank one positive operators with preassigned norms. In \cite
{B(H)case}, the necessary and sufficient condition for a positive
bounded operator to be  a strongly converging sum of projections was
obtained by the three authors of this article for the
$\mathbb{B}(\h)$ case and for the case of a countably decomposable
type III von Neumann factor, and for the ``diagonalizable" case  of
type II von Neumann factors.

In this paper, we extend the characterization of the positive operators that
are sums of projections to the case of
bounded  module maps (with adjoints defined)  on Hilbert
$C^*$-modules, namely,  $\mathbb{B}(\h)$ is replaced by the
 multiplier algebra  $\Mul(\A)$ of $\A$. Dealing with multiplier algebras, we
replace the strong operator topology by the strict topology.   We
point out  that when $\A$ is reduced to the algebra of complex
numbers $\mathbb C$, then $\mathbb{B}(\h) = \Mul(\K)$, the multiplier
algebra of the $C^*$-algebra $\K$ of compact operators on a separable Hilbert space, and the *-strong
operator topology on $\mathbb{B}(\h) $ is precisely  the strict
topology of $ \Mul(\K)$.

In this article we generalize the main result of \cite{Larsonetal}
to certain multiplier algebras, stated as follws.

\begin{thm}\label{T:main}  Let $\A$
be a $\sigma$-unital simple purely infinite $C^*$-algebra and $A$ be
a positive element of $\Mul(\A)$. Then $A$ is a strictly converging
sum of projections belonging to $\A$ if and only if one of the following  mutually exclusive conditions hold:
\item [(i)] $\| A \|_{ess} > 1$.
\item [(ii)] $\| A \|_{ess} = 1$ and $\| A \| > 1$.
\item [(iii)] $A\in \Mul (\A )\setminus \A $ is a projection.
\item[(iv)]  $A$ is the sum of finitely many projections
belonging to $\A$.
\end{thm}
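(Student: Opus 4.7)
The proof divides along the two directions, with sufficiency being the substantive part. For necessity, one checks that (i)--(iv) exhaust the possibilities when $A$ is a positive strict sum of projections in $\A$: if $A=\sum_n P_n$ converges strictly with $P_n$ projections in $\A$ and infinitely many nonzero, then testing against an approximate identity of $\A$ and passing to the quotient $\Mul(\A)/\A$ yields $\|A\|_{ess}\geq 1$, while either $A$ is itself a projection (case (iii)) or $\|A\|>1$ (cases (i) or (ii), depending on whether $\|A\|_{ess}$ equals or exceeds $1$). Case (iv) collects the remaining finite-sum situation.

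For sufficiency, cases (iii) and (iv) are immediate from the definitions, so the heart of the matter is cases (i) and (ii), under which $\|A\|>1$ and $\|A\|_{ess}\geq 1$. The plan, as announced in the abstract, is two-step. The first step is to produce, using a sequential approximate identity $(e_n)\subset \A$ and the associated bump decomposition $f_n := e_{n+1}-e_n$ of $1_{\Mul(\A)}$, a block tri-diagonal approximation of $A$ of the form $T = \sum_n f_n A f_n + \sum_n (f_n A f_{n+1} + f_{n+1} A f_n)$, converging strictly, and then to regroup consecutive blocks in order to write $A = \sum_k B_k$ strictly with each $B_k\in \A$ positive and satisfying $\|B_k\|>1$. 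The essential norm hypothesis is precisely what allows one to arrange \emph{every} block to have norm strictly greater than $1$, rather than tapering to norm at most $1$.

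The second step is to invoke the approximation result obtained earlier in the paper: in any $\sigma$-unital simple $C^*$-algebra of real rank zero, every positive element with norm greater than $1$ is a norm limit of finite sums of projections. This rests on extending the Perera--R\o rdam weak divisibility from the separable to the $\sigma$-unital setting. Applied to each $B_k$ with a rapidly summable tolerance $\varepsilon_k>0$, it produces finite projection sums $R_k\in\A$ with $\|B_k - R_k\|<\varepsilon_k$. The cumulative error $\sum_k(B_k-R_k)$ must then be absorbed into additional small projections, using pure infiniteness to manufacture projections of arbitrarily small ``relative size'' supported in the tails of the bump decomposition, so that the combined sum $\sum P_n$ still converges strictly to $A$.

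The main obstacle will be the block tri-diagonal step: the summands $B_k$ must all have norm strictly greater than $1$, and achieving this requires a careful redistribution of mass across blocks that exploits $\|A\|_{ess}\geq 1$. This is particularly delicate in the boundary case $\|A\|_{ess}=1$ of (ii), where no slack exists between the essential norm and the threshold needed by step two, and mass must be drawn from the ``spectral excess'' $\|A\|>1$ of specific pieces to offset shortfalls elsewhere. A secondary subtlety is ensuring that the off-diagonal terms $f_n A f_{n+1}$ can be absorbed while preserving both the positivity of each $B_k$ and the strict convergence of the resulting decomposition; this likely calls for a quasicentral approximate identity together with a two-scale grouping that sandwiches the off-diagonal contributions inside positive diagonal blocks.
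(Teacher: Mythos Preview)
Your necessity argument and your handling of case (i) are broadly aligned with the paper, but there are two genuine gaps.

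\textbf{First, the error-absorption mechanism in Step~2 is wrong as stated.} You propose approximating each $B_k$ by a finite projection sum $R_k$ with $\|B_k-R_k\|<\varepsilon_k$ and then ``absorbing'' $\sum_k(B_k-R_k)$ into ``additional small projections.'' But projections have norm~$1$; there are no small projections to absorb small errors. The paper's Lemma~2.5 gives something stronger than norm approximation: it produces $R_k\le B_k$, so the remainder $b_k:=B_k-R_k$ is \emph{positive}. The correct mechanism (Proposition~3.4) is to carry $b_k$ forward: since $b_k+B_{k+1}\ge B_{k+1}$ one still has $\|b_k+B_{k+1}\|>1$, and Lemma~2.5 applies again to $b_k+B_{k+1}$. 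Iterating gives $A=\sum_k R_k$ strictly with no residual error at all. Without this carry-forward, your scheme does not close.

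\textbf{Second, and more seriously, you have no mechanism for case~(ii).} You correctly identify that when $\|A\|_{ess}=1$ the natural block decomposition yields pieces whose norms need only tend to~$1$, not exceed it, and that the single localized excess from $\|A\|>1$ must somehow be spread over infinitely many blocks. But you offer no way to do this; the tri-diagonal regrouping cannot manufacture infinitely many blocks of norm $>1$ from one piece of excess mass. The paper's solution (Lemma~4.4) is a genuinely new idea: first isolate a projection $q_0\in\A$ with $\lambda_0 q_0\le A$ for some $\lambda_0>1$ (Lemma~4.1), then extract a diagonal piece $\sum_{k\ge1}\lambda_k q_k$ with $\lambda_k\to1$ and $q_k\sim q_0$ (Lemma~4.3 plus pure infiniteness), embed $\mathbb{B}(\ell_2)$ unitally into $q\Mul(\A)q$ where $q=\sum_{k\ge0}q_k$, and conjugate by an explicit unitary $u\in\mathbb{B}(\ell_2)$ whose effect is to turn the diagonal $\langle 1+t,1,1,\dots\rangle$ into $\langle 1+t/2,\,1+t/4,\,1+t/8,\dots\rangle$. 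The resulting projections $f_k=\rho(u^*e_ku)$ satisfy $\|f_kAf_k\|>1$ for all $k$, after which the case~(i) machinery (via $A=A^{1/2}(1-p)A^{1/2}+\sum_k A^{1/2}f_kA^{1/2}$) finishes the job. This unitary spreading trick is the technical heart of the paper, and nothing in your outline substitutes for it.

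A minor point: case~(iii) is not quite ``immediate from the definitions''; one needs that $p\A p$ has an approximate identity of projections so that $p=\sum_n(f_n-f_{n-1})$ strictly.
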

When $\A $   is unital and hence $\Mul(\A )=\A $, if a positive
element $A\in \A $ is a strictly converging sum of (nonzero)
projections belonging to $\A$, then the  sum must be finite
(Proposition \ref {lem:idealcase}), stated  as the case (iv).

The non-trivial case is thus when $A\in\Mul(\A)\setminus \A$ where
$\A $ is $\sigma$-unital but non-unital.  Notice that such a
$C^*$-algebra ($\sigma$-unital but non-unital simple purely
infinite) is necessarily stable (see \cite {Zhang8} and \cite
{Zhang2}) and has real rank zero (\cite[1.2]{ZhangRR0}).

The necessity of the conditions (i)--(iii) is given by Corollary
\ref{C:sums}. The sufficiency of (iii) being trivial, the main focus
of this paper is to prove the sufficiency of (i) and (ii).

The proof is arranged in the following way.

In section 2 we prove that all non-elementary, $\sigma$-unital, simple $C^*$-algebras
 of real rank zero are weakly divisible in the sense of Perera-Rordam in \cite{PerRor},
 thus generalizing the previous result of \cite{PerRor} from the separable category to
  the $\sigma$-unital category. This weak divisibility property and Fillmore's
  characterization of the finite rank operators in $\mathbb B(\h)$  enable us
   to approximate a positive element with a norm greater than 1 by finite sums of
projections   (Lemma \ref {lem:finitespectrum}.)

In section 3 we prove that a positive element $A$ of $\Mul (\A )$
with essential
norm $\|A\|_{ess}>1$ can be written as a strict sum of
projections in $\A$.

Section 4 deals with the crucial case when $\|A\|_{ess} =1$ and $\|A\|>1$.   We employ
a block tri-diagonal approximation and operator theory techniques to construct a strictly
converging sequence of projections $f_k\in A$ for which $\|f_kAf_k\|>1$ for all $k$
(Lemma \ref {lem:technicallemma}). From that, we decompose $A$ into a strict sum of
projections (Proposition \ref {P:ess=1}) and conclude the proof.

Aside from the works on the $\mathbb{B}(\h)$ and von Neumann factors cases that have
been mentioned above, this paper employs some previous results and ideas on the structures
of multiplier algebras of simple purely infinite  $C^*$-algebras scattered in the several
papers such as \cite{LinAF}, \cite{LinAH},
\cite{LinZhang}, \cite{PerRor}, and \cite{Zhang1} -- \cite{Zhang9}.

     The first and second named authors participated in the NSF supported Workshop in
      Analysis and Probability, Texas A \& M University, Summer 2006, where they first
      heard from David Larson about the results in \cite{Larsonetal} and \cite{Larson1}
      that stimulated this project.


The first and third author were partially supported by  grants from the
Charles Phelps Taft Research Center.

\section{\bf Weak divisibility of $\sigma$-unital $C^*$-algebras of real rank zero }
\setcounter{footnote}{200}

In this section we  show that in a $\sigma$-unital simple purely
infinite $C^*$-algebra $\A$, every positive element  with norm
greater than 1 can be approximated \textit{from underneath} by
finite sums of projections. To do so we first extend to all
non-elementary $\sigma$-unital $C^*$-algebras of real rank zero the
property of weak divisibility obtained for separable non-elementary
simple $C^*$-algebras of real rank zero  by Perera and Rordam in
\cite[5.3]{PerRor}. Recall that a $C^*$-algebra is called
non-elementary if it is neither $\K$ nor $\mathbb M_n$ for any $n$.
A $C^*$-algebra is called to be $\sigma$-unital, if it has a
strictly positive element $b$, namely,  $(b\A )^-=(\A b)^- = \A $. A
$C^*$-algebra $\B$ is  weakly divisible (\cite[5.1,5.2]{PerRor}) if
and only if  for any nonzero projection $p$ of $\B$   there exists a
unital
*-homomorphism from $\mathbb M_{2}\oplus \mathbb M_{3} $ to $p\B p$.

\begin{prop}
If $\A $ is a non-elementary $\sigma$-unital simple  $C^*$-algebra
of real rank zero, then $\A$ is weakly divisible.
\end{prop}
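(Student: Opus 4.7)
The plan is to reduce the $\sigma$-unital case to the separable case established by Perera and Rordam in \cite[5.3]{PerRor}. Given a nonzero projection $p \in \A$, I would construct a separable $C^*$-subalgebra $\B \subseteq \A$ containing $p$ that is itself simple, non-elementary, and of real rank zero. Once such a $\B$ is found, applying Perera--Rordam to $\B$ yields a unital $*$-homomorphism $\M_2 \oplus \M_3 \to p\B p \subseteq p\A p$, which is the desired witness of weak divisibility for the given $p$.

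To construct $\B$ I would use the standard Blackadar-style inductive device: recursively define separable $C^*$-subalgebras $\A_0 \subseteq \A_1 \subseteq \cdots$ of $\A$ with $p \in \A_0$, and set $\B := \overline{\bigcup_n \A_n}$. At each stage $n$, indexing by a countable norm-dense subset of $\A_n$, I would adjoin countably many elements of $\A$ supplying three classes of witnesses. First, for each self-adjoint element in the dense subset and each positive integer $k$, a self-adjoint element of $\A$ with finite spectrum within $1/k$, forcing real rank zero of $\B$. Second, for each pair $(a,b)$ in the dense subset with $a \neq 0$ and each $k$, finitely many $x_i, y_i \in \A$ with $\|b - \sum_i x_i a y_i\| < 1/k$, forcing simplicity of $\B$. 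Third, for each $q$ in a countable norm-dense family $P_n$ of projections of $\A_n$, a nonzero proper subprojection $q' < q$ in $\A$; such $q'$ exists because the corner $q\A q$ is a simple real rank zero $C^*$-algebra that cannot equal $\mathbb{C}q$, since a minimal projection in a simple $C^*$-algebra forces elementarity.

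The main obstacle will be verifying that the resulting $\B$ is non-elementary, because this is a negative condition --- the absence of minimal projections --- that need not automatically pass to the limit. To handle it, given an arbitrary projection $r \in \B$ I would fix $\epsilon < 1/4$ and use density of $\bigcup_n \A_n$ in $\B$ to choose a self-adjoint $a \in \A_n$ with $\|r - a\| < \epsilon$, whose spectrum is then confined to $[-\epsilon,\epsilon] \cup [1-\epsilon, 1+\epsilon]$. The spectral projection $q := \chi_{[1/2,\infty)}(a)$ lies in $C^*(a) \subseteq \A_n$ and satisfies $\|r - q\| < 2\epsilon$; shrinking $\epsilon$ further if needed, one can arrange that $q$ is within norm distance $1$ of some $q'' \in P_n$. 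The construction then furnishes a proper subprojection of $q''$ lying in $\A_{n+1} \subseteq \B$, and the chain of Murray--von Neumann equivalences $r \sim q \sim q''$ inside $\B$ (each available from the smallness of the corresponding norm distances) transfers this to a proper subprojection of $r$ inside $\B$. Hence $\B$ admits no minimal projection, so being simple it is non-elementary, and Perera--Rordam applies to $\B$ to complete the proof.
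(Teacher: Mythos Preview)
Your argument is correct, but it takes a genuinely different route from the paper's proof. You reduce to the separable case via a Blackadar-style inductive construction of a separable, simple, non-elementary, real rank zero subalgebra $\B$ containing $p$, and then invoke Perera--Rordam \cite[5.3]{PerRor}. The paper instead gives a short direct argument inside $p\A p$ that avoids any reduction to separability and does not rely on \cite{PerRor} at all: it applies Zhang's halving result \cite[1.1]{ZhangConf1} (valid for any simple real rank zero $C^*$-algebra) with $q=r=p$ and $n=1$ to write $p=p_1\oplus p_2\oplus r_0$ with $p_1\sim p_2$ and $r_0$ equivalent to a subprojection $r_1\le p_1$; setting $r_2=v^*r_1v$ for a partial isometry $v$ with $vv^*=p_1$, $v^*v=p_2$ yields $p=(p_1-r_1)\oplus(p_2-r_2)\oplus r_0\oplus r_1\oplus r_2$, which immediately exhibits a unital embedding of $\M_2\oplus\M_3$ into $p\A p$.

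The paper's approach is considerably shorter and more self-contained, and it generalizes effortlessly to give Lemma~\ref{lem:new} (unital embeddings of $\M_{2^n}\oplus\M_{2^n+1}$), which is what is actually used later. Your separable-reduction argument, while heavier, is a robust general-purpose device: it would transfer essentially any ``local'' property known in the separable case to the $\sigma$-unital case. One small point worth tightening in your write-up: you should restrict to \emph{nonzero} projections $r\in\B$ (and hence $q,q''\ne 0$) so that the subprojection witnesses are available; and when you claim a proper nonzero subprojection of $q''$ exists in $\A$, the justification is that $q''\A q''$ is unital, simple, of real rank zero, and not equal to $\mathbb{C}q''$ (else $q''$ would be minimal, forcing $\A$ elementary), so it contains a nontrivial projection.
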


\begin{proof} By \cite[Lemma 5.2]{PerRor} it suffices to show that
for each nonzero projection $p$ of $\A$ there exists a unital
*-homomorphism from $\M_2 \oplus \M_{3}$
into $p\A p$. To prove this, we use the result of divisibility of
all projections in any simple $C^*$-algebra of real rank zero in
\cite[1.1]{ZhangConf1}: For each pair of projections $(q,r)$ in $\A$
and each natural number $n$ the projection $q$ can be rewritten as a
direct sum of mutually orthogonal subprojections
$$q=p_1\oplus p_2\oplus \cdots \oplus p_{2^n}\oplus r_0 $$
such that   $p_i$ is equivalent to $p_j$ for all pairs $(i, j)$ in
the sense of  Murray-von Neumann   and $r_0$ is equivalent to a
subprojection of $r$.

Applying this result to the case $q=r=p$ and $n=1$, one has
$$ p= p_1\oplus p_2 \oplus r_0$$ where $p_1$ is equivalent to $p_2$
and $r_0$ is equivalent to a subprojection  of $p_1$, say $r_1$.
Choose a partial isometry $v$ such that $p_1=vv^*$, $p_2=v^*v$, and
set  $r_2= v^*r_1v$. Then
$$p=(p_1-r_1)\oplus (p_2-r_2)\oplus r_0\oplus r_1\oplus r_2.$$
 Then $p_1-r_1$ and $p_2-r_2$ are  equivalent, and so are $r_0$, $r_1$ and $r_2$.
 This decomposition of $p$ into these five
projections leads to a unital *-homomorphism from  $\M_2 \oplus
\M_{3}$ into $p\A p$.
\end{proof}

The same idea above also proves the following lemma that will be
used as one of the technical ingredients in this article.

\begin{lem}
Let $\A$ be a non-elementary $\sigma$-unital simple $C^*$-algebra of
real rank zero. Then for every integer $n \geq 1$ and for every
nonzero projection $p $ of $ \A $ there exists a unital
$*$-embedding of $\M_{2^n} \oplus \M_{2^n+1}$ into $p  \A p$.
\label{lem:new}
\end{lem}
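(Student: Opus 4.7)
The plan is to imitate the argument proving the previous proposition, replacing the two-block/three-block refinement used there by a $2^n$-block/$(2^n+1)$-block refinement. The goal is to exhibit $p$ as an orthogonal direct sum of $2^n$ mutually Murray--von Neumann equivalent nonzero projections together with $2^n+1$ further mutually equivalent nonzero projections; the associated partial isometries will then supply matrix unit systems for $\M_{2^n}$ and $\M_{2^n+1}$ sitting orthogonally inside $p\A p$.

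Concretely, I would first apply the divisibility result \cite[1.1]{ZhangConf1} to $q=r=p$ with parameter $n$, obtaining
$$p \;=\; p_1\oplus p_2\oplus\cdots\oplus p_{2^n}\oplus r_0$$
with all $p_i$ mutually equivalent and $r_0$ equivalent to a subprojection $r_1\le p_1$. Choosing, for $i=2,\ldots,2^n$, partial isometries $v_i$ with $v_iv_i^*=p_1$ and $v_i^*v_i=p_i$, set $r_i:=v_i^*r_1v_i\le p_i$. Standard Murray--von Neumann computations then give $r_0\sim r_1\sim\cdots\sim r_{2^n}$ and $p_i-r_i=v_i^*(p_1-r_1)v_i\sim p_1-r_1$, so
$$p \;=\; \bigoplus_{i=1}^{2^n}(p_i-r_i)\;\oplus\;\bigoplus_{j=0}^{2^n} r_j$$
displays $p$ as the direct sum of two mutually orthogonal families of equivalent projections, of sizes $2^n$ and $2^n+1$ respectively. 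Assembling the corresponding matrix units from the equivalences yields a unital $*$-embedding $\M_{2^n}\oplus\M_{2^n+1}\hookrightarrow p\A p$.

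The one delicate point, and the main obstacle, is guaranteeing that every $p_i-r_i$ (and hence every $r_i$) is actually nonzero, i.e.\ that $r_0$ is \emph{strictly} smaller than $p_1$ in the Murray--von Neumann order, so that the matrix algebra embedding is genuine rather than degenerate. The cleanest way to enforce this is to invoke the divisibility theorem with parameter $n+1$ rather than $n$: writing $p=p_1'\oplus\cdots\oplus p_{2^{n+1}}'\oplus r_0$ with $r_0\lesssim p_1'$ and then regrouping adjacent pairs $p_i:=p_{2i-1}'\oplus p_{2i}'$ produces mutually equivalent blocks in which $p_1-r_1\ge p_2'\ne 0$, a property that propagates to every index through the partial isometries $v_i$. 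The non-elementariness, simplicity and real rank zero of $\A$ are precisely what make this finer splitting available.
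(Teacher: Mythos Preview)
Your argument is essentially the paper's own proof: apply Zhang's divisibility result with parameter $n$, carry the remainder $r_0\sim r_1\le p_1$ across to each block via the equivalences $p_1\sim p_i$, and read off matrix units from the resulting $2^n+(2^n{+}1)$ orthogonal summands. The paper in fact does not address the nondegeneracy issue you raise; its concluding sentence produces only ``a unital $*$-homomorphism'' rather than an embedding.

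Your attempted repair, however, handles only one of the two degenerate cases. Replacing $n$ by $n+1$ and pairing adjacent blocks does force $p_i-r_i\ge p_{2i}'\ne 0$, ruling out $r_1=p_1$. But it does nothing to rule out $r_0=0$: Zhang's theorem permits a vanishing remainder, and your parenthetical ``(and hence every $r_i$)'' is unwarranted, since $p_i-r_i\ne 0$ in no way implies $r_i\ne 0$. When $r_0=0$ your construction yields only a unital embedding of $\M_{2^n}$, not of $\M_{2^n}\oplus\M_{2^n+1}$. Securing a genuine embedding requires a further appeal to non-elementariness to force a nonzero remainder, which your argument as written does not supply.
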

\begin{proof}
Applying \cite[1.1]{ZhangConf1} to the case $q=r=p$ and arbitrary
natural number $n $, one has
$$ p= p_1\oplus p_2 \oplus \cdots \oplus p_{2^n}\oplus r_0$$ where $p_i$ is equivalent to
$p_j$ for all pair $(i,j)$ and $r_0$ is equivalent to a
subprojection  of $p_1$, say $r_1$. For each $k$ choose a partial
isometry $v_k\in p\A p$ such that $p_1=v_kv_k^*$ and $p_k=v_k^*v_k$.
Let $r_k= v_k^*r_1v_k$. Then
$$p=(p_1-r_1)\oplus (p_2-r_2)\oplus \cdots   \oplus
(p_{2^n}-r_{2^n})\oplus r_0\oplus r_1\oplus r_2\oplus \cdots \oplus
r_{2^n}.$$
 Then $p_i-r_i$ and $p_j-r_j$ are  equivalent for all pairs $(i,j)$, and so are $r_0$, $r_1$, $\cdots,$
   $r_{2^n}$. This decomposition of $p$
 leads to a unital *-homomorphism from  $\M_{2^n} \oplus \M_{2^n+1}$ into
$p\A p$.
\end{proof}

       We  need the following  approximation property for positive elements in a $C^*$-algebra of real rank zero.

\begin{lem}\label{lem:rr0} Let  $\C$ be  a $C^*$-algebra of
real rank zero and $c$ be any positive element in $ \C$. For
$\epsilon > 0$  there exist pairwise orthogonal projections $p_1,
p_2, ..., p_n$ in $\C$ and positive real numbers $\alpha_1,
\alpha_2, ..., \alpha_n$ such that
\item[(i)] $\|\alpha_1 p_1 + \alpha_2 p_2 + ...
+ \alpha_n p_n- c\| < \epsilon$
\item[(ii)]  $\alpha_1 p_1 + \alpha_2 p_2 + ... + \alpha_n p_n \leq c$.

\end{lem}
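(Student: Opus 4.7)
The plan is to approximate $c$ from below by a step-function via continuous functional calculus, then to use real rank zero to replace the abstract spectral projections of $c$ (which a priori live only in the bidual $\C''$) by genuine projections in $\C$. Fix a partition $0 = s_0 < s_1 < \cdots < s_n$ of $[0, \|c\|]$ with $s_n > \|c\|$ and $s_k - s_{k-1} < \epsilon/4$, together with a small auxiliary $\eta > 0$. Defining $f(t) := \sum_{k=1}^{n-1}(s_k - s_{k-1})\, \chi_{(s_k, \infty)}(t)$, a direct check gives $f(t) \le t$ and $t - f(t) < \epsilon/2$ on $[0, \|c\|]$, so $f(c) \le c$ and $\|c - f(c)\| < \epsilon/2$ inside $\C''$. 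The key identity is this: any projection $q \in \C$ lying in the hereditary $C^*$-subalgebra $\overline{(c - t)_+ \C (c - t)_+}$ (equivalently, $q \le \chi_{(t, \infty)}(c)$ in $\C''$) automatically satisfies $tq \le c$, because $c - tq = (c - t\,\chi_{(t, \infty)}(c)) + t(\chi_{(t, \infty)}(c) - q)$ is a sum of two positive operators in $\C''$.

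By real rank zero, each hereditary $C^*$-subalgebra $\overline{(c - s_k)_+ \C (c - s_k)_+}$ inherits real rank zero and therefore admits an approximate identity of projections. Using this, inductively construct a decreasing chain $r_1 \ge r_2 \ge \cdots \ge r_{n-1}$ of projections in $\C$ with $r_k \in \overline{(c - s_k)_+ \C (c - s_k)_+}$ such that each $r_k$ is an approximate identity for $(c - s_{k+1} - \eta)_+$. Starting from $k = n-1$ and working downward, at each step find $r_k'$ inside the corner $(1 - r_{k+1}) \overline{(c - s_k)_+ \C (c - s_k)_+} (1 - r_{k+1})$ (itself real rank zero) serving as an approximate identity for the cut-down $(1 - r_{k+1})(c - s_{k+1} - \eta)_+ (1 - r_{k+1})$, and then set $r_k := r_{k+1} + r_k'$. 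This preserves monotonicity while keeping $r_k$ inside the required hereditary subalgebra.

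Set $r_n := 0$ and let $p_k := r_k - r_{k+1}$ (pairwise orthogonal projections in $\C$) and $a := \sum_{k=1}^{n-1} s_k p_k$; by Abel summation, $a = \sum_{k=1}^{n-1}(s_k - s_{k-1})\, r_k$. The term-wise inequality $(s_k - s_{k-1}) r_k \le (s_k - s_{k-1})\, \chi_{(s_k, \infty)}(c)$ in $\C''$ together with the step-function bound $f(c) \le c$ yields $a \le c$. For the norm estimate, decompose $c - a = (c - f(c)) + (f(c) - a)$: the first summand has norm $< \epsilon/2$; the second equals $\sum(s_k - s_{k-1})(\chi_{(s_k, \infty)}(c) - r_k)$, which by the approximate-identity property of $r_k$ is dominated (up to arbitrarily small error) by $\sum(s_k - s_{k-1})\, \chi_{(s_k, s_{k+1} + \eta]}(c)$, and since these spectral bands have at most pairwise overlap this quantity has operator norm $< \epsilon/2$; hence $\|c - a\| < \epsilon$.

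The main technical obstacle lies in the inductive construction of the projections $r_k$: one must simultaneously ensure the monotonicity $r_k \ge r_{k+1}$, correct placement in the nested hereditary $C^*$-subalgebras, and a quantitative upper approximation of the bidual spectral projection $\chi_{(s_{k+1}, \infty)}(c)$. The real rank zero hypothesis is invoked repeatedly through the existence of approximate identities of projections in hereditary subalgebras, and careful bookkeeping of the parameter $\eta$ and of the approximate-identity tolerances at each step is needed to guarantee that the final operator norm $\|c - a\|$ indeed remains below $\epsilon$.
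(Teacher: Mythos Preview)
Your approach works but is far more elaborate than the paper's. You build a monotone chain $r_1\ge\cdots\ge r_{n-1}$ of projections in $\C$ approximating the spectral projections $\chi_{(s_k,\infty)}(c)$ in $\C''$, set $a=\sum s_k(r_k-r_{k+1})$, and control $\|c-a\|$ through a spectral-band argument. The paper sidesteps all of this with a single shift: put $g(t)=(t-\epsilon/2)_+$, invoke real rank zero in the hereditary subalgebra $\overline{g(c)\,\C\, g(c)}$ once to get a finite-spectrum element $d$ there with $\|g(c)-d\|<\epsilon/2$, and observe that any such $d$ automatically satisfies $d\le c$ --- because on the range of $\chi_{[\epsilon/2,\infty)}(c)$ one has $c-g(c)=\epsilon/2$, which absorbs the approximation error, while off that range $d$ vanishes since it lives in the hereditary subalgebra of $g(c)$. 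No chain, no Abel summation, no band estimate. Your route does yield extra placement information (each $p_k$ sits in a prescribed spectral hereditary subalgebra of $c$), which could be useful elsewhere; for the bare lemma the paper's device is dramatically shorter.

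One point in your sketch needs sharpening: the claimed ``domination'' $E_k-r_k\lesssim\chi_{(s_k,s_{k+1}+\eta]}(c)$ is not an operator inequality --- for projections $P,Q$ one does not in general have $PQP\le Q$, so $E_k-r_k=(E_k-r_k)F_k(E_k-r_k)+(\text{small})$ does not give $E_k-r_k\le F_k+(\text{small})$. What the approximate-identity property genuinely yields (via $(1-r_k)b(1-r_k)=(1-r_k)b(1-r_{k+1})(1-r_k)$, using $r_k\ge r_{k+1}$) is that $\|(E_k-r_k)\,\chi_{(s_{k+1}+\eta,\infty)}(c)\|$ is small; from this one obtains the norm bound on $\sum(s_k-s_{k-1})(E_k-r_k)$ by estimating $\|(E_k-r_k)\xi\|\le\|F_k\xi\|+\delta_k$ for unit vectors $\xi$ and using $\sum_k F_k\le 2$ from the pairwise band overlap. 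This costs an extra constant, so the partition mesh must be chosen somewhat finer than your $\epsilon/4$. You already flag the bookkeeping as the main burden; the paper's argument has essentially none.
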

\begin{proof} Without loss of generality, assume that $\epsilon< 2\|c\|$. Let $g$ be the
 piecewise linear function $$g(x)   =:\begin{cases} 0\quad &0\le x\le \epsilon/2\\
x-\epsilon/2\quad &x> \epsilon/2.
\end{cases}$$ The  hereditary subalgebra of $\C$ generated by $\overline{g(c)\C g(c)}$  still
has real rank zero (\cite{BP}).
 Thus one can find a positive element $d \in \overline{g(c)\C g(c)}$ with finite spectrum, say
$d= \alpha_1 p_1 + \alpha_2 p_2 + ... + \alpha_n p_n $, such that
$\|g(c)-d\|<\epsilon /2$. It follows that
$$\|c-\alpha_1 p_1 + \alpha_2 p_2 + ...
+ \alpha_n p_n\|\le \|c-g(c)\|+\|g(c)-d\|< \epsilon /2 + \epsilon
/2= \epsilon.$$
  The key point is to prove that $d\le c$.
Assume without loss of generality that $\C$ act faithfully and
non-degenerately on a Hilbert space $\h$. Let   $q
=:\chi_{[\epsilon/2, \infty)}(c)$. For $ \xi \in q \h$, one has that
 \begin{align*}
<(c-d)\xi,\xi>
&=<(c-g(c))\xi,\xi >+<(g(c)-d)\xi,\xi>  \\
&=\epsilon< \xi,\xi >+<(g(c)-d)\xi,\xi> \\
&\ge  (\epsilon  -\|g(c)-d\| )<\xi,\xi>\\
& \ge 0.
\end{align*}
 If $\xi \in   q^\bot \h$, then $d\xi =0$ because $d \in \overline{g(c)\C g(c)}$, and hence also
  $<(c-d)\xi,\xi> =<c\xi,\xi >\ge 0. $ Therefore, $c \geq d$, as
wanted.
  \end{proof}

We will use the following result due to Fillmore \cite[Thm.
1]{Fillmore}
 (see   also
\cite[Prop. 6]{Larsonetal}   and  \cite [2.5, 2.6]{B(H)case}.)

\begin{prop}
Let $tr$ be the natural (non-normalized) trace on the algebra $\M_n$
of $n$ by $n$ complex matrices and  $A \in \M_n$ be a positive
matrix. Then $A$ is a sum of projections in $\M_n$ if and only if
$tr(A)$ is an integer and $tr(A) \geq \textup{rank}(A)$.
\label{prop:Larsonetal}
\end{prop}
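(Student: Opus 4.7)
The plan is to argue necessity by trace--rank bookkeeping and sufficiency by induction on $\tr(A)$, at each step subtracting a carefully chosen rank-one projection.

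For necessity, if $A = \sum_{i=1}^m P_i$ with $P_i$ projections in $\M_n$, then $\tr(A) = \sum_i \tr(P_i) = \sum_i \textup{rank}(P_i)$ is a non-negative integer. Moreover, from $\langle A\xi,\xi\rangle = \sum_i \|P_i\xi\|^2$ one has $\ker A = \bigcap_i \ker P_i$, so $\textup{range}(A) = \sum_i \textup{range}(P_i)$ and hence $\textup{rank}(A) \le \sum_i \textup{rank}(P_i) = \tr(A)$.

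For sufficiency I would induct on $k := \tr(A) \in \mathbb{Z}_{\ge 0}$, the case $k = 0$ being trivial. The key is to produce a rank-one projection $P$ with $A \ge P$ such that $A - P$ still satisfies $\tr(A-P) \ge \textup{rank}(A-P)$; then the induction hypothesis applied to $A - P$, whose trace is $k-1$, completes the argument. When $\tr(A) > \textup{rank}(A)$, the top eigenvalue obeys $\|A\| \ge \tr(A)/\textup{rank}(A) > 1$, so taking $P$ to be the projection onto a unit $\|A\|$-eigenvector yields $A - P \ge 0$ with $\textup{rank}(A-P) \le \textup{rank}(A) \le \tr(A) - 1 = \tr(A-P)$. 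The delicate case is $\tr(A) = \textup{rank}(A) = r$: since $\tr(A-P) = r-1$, one must arrange $\textup{rank}(A-P) = r - 1$, i.e.\ the rank must strictly drop. Writing $A_0$ for the positive invertible restriction $A|_{\textup{range}(A)}$ and $P = \xi\xi^*$ for a unit $\xi \in \textup{range}(A)$, the condition $A \ge P$ is equivalent to $\langle A_0^{-1}\xi, \xi\rangle \le 1$, while the rank drop $\textup{rank}(A-P) = r - 1$ is equivalent, via $\det(A_0 - \xi\xi^*) = \det(A_0)(1 - \langle A_0^{-1}\xi,\xi\rangle)$, to equality $\langle A_0^{-1}\xi,\xi\rangle = 1$. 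The hypothesis forces $\lambda_1(A) \ge 1 \ge \lambda_r(A)$, so the continuous function $\xi \mapsto \langle A_0^{-1}\xi,\xi\rangle$ on the unit sphere of $\textup{range}(A)$ takes the values $1/\lambda_1 \le 1$ and $1/\lambda_r \ge 1$ at the extremal eigenvectors, and the intermediate value theorem supplies the required $\xi$.

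The main obstacle is this last case: if the rank did not drop by one when subtracting $P$, the hypothesis would fail for $A-P$ and the induction would stall. The intermediate-value argument above resolves this point, relying crucially on $\tr(A) \ge \textup{rank}(A)$ to place the critical value $1$ inside the interval $[1/\lambda_1, 1/\lambda_r]$ swept out by $\langle A_0^{-1}\xi,\xi\rangle$.
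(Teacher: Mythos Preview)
Your argument is correct. The paper does not supply its own proof of this proposition: it simply quotes the result from Fillmore \cite[Thm.~1]{Fillmore} (with supplementary references to \cite{Larsonetal} and \cite{B(H)case}) and uses it as a black box. So there is no in-paper proof to compare against.

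That said, your approach is essentially Fillmore's original one: induction on the integer trace, peeling off a rank-one projection at each step, with the delicate case being $\tr(A)=\textup{rank}(A)$ where the rank must strictly drop. Your handling of that case via the identity $\det(A_0-\xi\xi^*)=\det(A_0)\bigl(1-\langle A_0^{-1}\xi,\xi\rangle\bigr)$ and the intermediate value theorem on the unit sphere is clean and complete; the observation $\lambda_1\ge 1\ge \lambda_r$ (forced by the trace-equals-rank hypothesis) is exactly what makes $1$ lie in the range of $\xi\mapsto\langle A_0^{-1}\xi,\xi\rangle$. The necessity direction is likewise fine.
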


Recall  that all simple purely infinite $C^*$-algebras have real
rank zero (\cite[1.2]{ZhangRR0}). The following lemma is one of the
two central technical ingredients of this article.

\begin{lem}  Let $\A$ be a $\sigma$-unital   purely infinite simple
$C^*$-algebra and   $A \in \A $ be a positive element  with $\| A \|
> 1$. Then for every $\epsilon > 0$  there exist positive elements
$A_1, A_2 \in \A $ such that
\item [(i)]  $A = A_1 + A_2,$
\item [(ii)]  $A_1$ is the sum of finitely many projections belonging to $\A
$, and
\item [(iii)]   $\| A_2 \| < \epsilon . $
\label{lem:finitespectrum}
\end{lem}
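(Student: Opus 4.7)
I combine the real-rank-zero approximation of Lemma \ref{lem:rr0} with Fillmore's criterion (Proposition \ref{prop:Larsonetal}) applied inside a matrix subalgebra of $\A$ built from weak divisibility. The hypothesis $\|A\|>1$ provides a spectral coefficient strictly exceeding $1$, which supplies exactly the trace surplus Fillmore demands once the ambient matrix algebra is taken large enough.

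First, set $\epsilon':=\min(\epsilon/2,(\|A\|-1)/2)$ and apply Lemma \ref{lem:rr0} to obtain pairwise orthogonal nonzero projections $p_1,\dots,p_n\in\A$ and positive scalars $\alpha_1,\dots,\alpha_n$ with $d := \sum_i \alpha_ip_i \le A$ and $\|A-d\|<\epsilon'$. Since $\|d\|\ge\|A\|-\epsilon'>1$, after reindexing we may assume $\alpha_1>1$. Because $\A$ is purely infinite simple, every $p_i$ is properly infinite; combining this with Lemma \ref{lem:new}, I subdivide $p_1$ into a large number $L$ of mutually orthogonal Murray--von Neumann equivalent subprojections, and (after a further subdivision of each $p_j$ $(j\ge 2)$ into pieces of the same Murray--von Neumann class) assemble the resulting family of $N$ pairwise orthogonal, mutually equivalent nonzero projections into a unital $*$-embedding $\M_N\hookrightarrow P\A P$ (where $P$ denotes their sum) via their linking partial isometries in $\A$. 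Inside this $\M_N\subset\A$, the element $d$ becomes a positive diagonal matrix of full rank $N$ with unnormalized trace $t = L\alpha_1 + \sum_{j\ge 2}k_j\alpha_j$, where $k_j$ counts the atomic pieces produced from $p_j$.

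Set $\tilde d:=\beta d$ with $\beta:=\lfloor t\rfloor/t\in(0,1]$. Then $\tr(\tilde d) = \lfloor t\rfloor\in\Z$ and $\operatorname{rank}(\tilde d)=N$. Choosing $L$ large enough that $L(\alpha_1-1)\ge \sum_{j\ge 2}k_j\max(0,1-\alpha_j)$ forces $t\ge N$, hence $\tr(\tilde d)\ge\operatorname{rank}(\tilde d)$, so Fillmore's criterion (Proposition \ref{prop:Larsonetal}) guarantees that $\tilde d$ is a finite sum of projections in $\M_N\subset\A$. The error $\|d-\tilde d\|=(1-\beta)\|d\|\le \|A\|/(L\alpha_1)$ is driven below $\epsilon'$ by taking $L$ larger still, so $A_1:=\tilde d$ and $A_2:=A-\tilde d$ satisfy $A_2\ge A-d\ge 0$ and $\|A_2\|\le \|A-d\|+\|d-\tilde d\|<2\epsilon'\le\epsilon$.

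The delicate step is the construction of $\M_N\subset\A$ with all the $p_i$'s (or their subdivisions) sitting as matrix projections simultaneously: different $p_i$'s may carry distinct classes in $K_0(\A)$, which forces a finer subdivision into equivalent atomic subprojections of a common class. This is exactly what the generalized weak divisibility of Lemma \ref{lem:new} makes possible. The role of $\|A\|>1$ (used only via $\alpha_1>1$) is to supply the surplus trace that lets Fillmore's trace$\,\ge\,$rank inequality survive in $\M_N$ even when some $\alpha_j$ lie below $1$; without this surplus, the operator-norm lower bound $\|q\|=1$ on any nonzero projection $q$ would obstruct approximation from below.
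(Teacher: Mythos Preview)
The delicate step you flag is where the argument breaks. To place $d=\sum_i\alpha_ip_i$ as a diagonal element of a single $\M_N\subset\A$, you must write each $p_i$ as an orthogonal sum of minimal projections of that $\M_N$; since those minimal projections are all equivalent to some fixed $r$, this forces $[p_i]=k_i[r]$ in $K_0(\A)$ for every $i$. Lemma~\ref{lem:new} does not produce such a common atomic class: it decomposes a \emph{single} projection $p$ into $2^n$ copies of one class and $2^n+1$ copies of another, with both classes depending on $p$. A common $[r]$ need not exist at all. For instance, with $K_0(\A)\cong\Z$ take nonzero projections with $[p_1]=1$ and $[p_2]=0$ (both classes are realized by nonzero projections in a purely infinite simple algebra); then $k_2[r]=0$ forces $[r]=0$, whence $k_1[r]=0\ne 1$. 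Nor can you tolerate remainders in the subdivision: a leftover piece $s_j\le p_j$ contributes $\alpha_j s_j$ of norm $\alpha_j$, which is not small.

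The paper sidesteps this obstruction by never building a single global $\M_N$. It applies Lemma~\ref{lem:new} to each corner $e_i\A e_i$ and $f_j\A f_j$ \emph{separately}, obtaining unrelated atomic classes in each. Where the coefficient exceeds $1$, Fillmore's criterion applies directly inside that corner. Where the coefficient $\mu_j\le 1$ leaves a trace deficit, pure infiniteness enters in a different way: inside the large-coefficient corner $e_{11}\A e_{11}$ one finds many mutually orthogonal projections equivalent to the atomic projections of the $f_j$-block, and these (carrying coefficient $k_1/N>1$) are adjoined to $C'_{j1}$ to form a larger matrix in which trace exceeds rank. The borrowing of trace is achieved by transplanting equivalent projections across corners, not by forcing all corners into one matrix algebra.
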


\begin{proof}
By Lemma \ref {lem:rr0} we can assume without loss of generality that $A$ is a positive element with finite spectrum and with norm
strictly greater than one. Then there are  nonzero pairwise orthogonal
projections
  $$e_1, e_2, ..., e_m, f_1, f_2, ..., f_n \ \in \   \A
 $$ and  strictly positive real numbers  $\lambda_1,
\lambda_2, ..., \lambda_m, \mu_1, \mu_2, ..., \mu_n$
 such that
$$A = \sum_{i=1}^m \lambda_i e_i  + \sum_{j=1}^n \mu_j f_j,$$
where $1 < \lambda_i$ and $0 < \mu_j \leq 1$ for $1 \leq i \leq m$
and  $1 \leq j \leq n$. Note that  $\| A \| > 1$ implies $m \geq 1$;
but  $n=0$ is possible.

Choose $N$ large enough in the form $2^k$ such that there are
positive integers $k_i, k'_i, l_j, l'_j$  for all $1\le i\le m$ and
$1\le j\le n$ satisfying the following inequalities:
\begin{align*}
&1<k_i/ N < \lambda_i \ \ \text{and} \ \ 1< k'_i/ (N+1) < \lambda_i ,\\
&l_j / N < \mu_j \ \ \ \text{and} \ \ \ l'_j / (N + 1) <\mu_j,\\
&0<\lambda_i - k_i/N < \frac{\epsilon}{2} \ \ \text{and} \ \  0< \lambda_i
-k'_i/(N+1) < \frac{\epsilon}{2},\\
&0<\mu_j - l_j/N  < \frac{\epsilon}{2} \ \  \text{and} \ \ 0< \mu_j -
l'_j/(N+1)  <\frac{\epsilon}{2}.
\end{align*}

By Lemma \ref{lem:new} there exists for $1 \leq i \leq m$  a
unital
*-homomorphism from    $\M_N \oplus \M_{N+1}$ onto a $C^*$-subalgebra
$\B_i$ of the corner $e_i  \A   e_i$, and for $1 \leq j \leq n$
there is a unital *-homomorphism from
  $\M_N \oplus \M_{N+1}$  onto a $C^*$-subalgebra $\C_j$  of $f_j \A
  f_j$.  Notice that for a given $i$, the projections in $\B_i$ that correspond
  to the minimal projections of $\M_N$  are all mutually equivalent, but in
   general they are not comparable to the minimal projections in $\C_j$ or in $\B_{i'}$
   for $i\ne i'$ or to those in $\B_i$   that correspond to the minimal projections of
   $\M_{N+1}$. The identity of $\B_i$ is $e_i$ and the identity of $\C_j$
  is $f_j$ and
  this way, each summand $\lambda _ie_i$ is identified with a direct
  sum of two diagonal matrices, say $B_i=B_{i1}\oplus B_{i2}$ in $\B_i$, where $B_{i1}$
  is a matrix of size $N\times N$, $B_{i2}$ is a matrix of size $(N+1)\times (N+1)$, and both
have all diagonal entries $\lambda _i$.
  Similarly, each summand $\mu _jf_j$ is identified with a direct
  sum of two diagonal matrices, say $C_j=C_{j1}\oplus C_{j2}$ in $\C_j$, where $C_{j1}$
  is of size $N\times N$,  $C_{j2}$ is of size $(N+1)\times (N+1)$, and both have all diagonal entries $\mu _j$.

  Modify $B_i=B_{i1}\oplus B_{i2}$ to $B'_i=B'_{i1}\oplus
  B'_{i2}$ where $B'_{i1}$ has the same matrix units as  $B_{i1}$ but
  has all  diagonal entries    $k_i/N$ instead of  $\lambda _i$ and  $B'_{i2}$ has the same matrix units as of
 $B_{i2}$ but has all diagonal entries    $k'_i/(N+1)$ instead of  $\lambda _i$.
 Similarly, modify $C_j=C_{j1}\oplus C_{j2}$ to $C'_j=C'_{j1}\oplus C'_{j2}$
 by replacing the
 diagonal entries $\mu _j$  of $C_{j1}$ with $l_j/N$  and the
 diagonal entries $\mu _j$  of $C_{j2}$ with $l'_j/(N+1)$. Let $$A'=\sum_{i=1}^m B'_i + \sum_{j=1}^n C'_j.$$

Notice that all the matrices $B'_{i1}$ and $C'_{j1}$ have rank $N$
and all the matrices $B'_{i2}$ and $C'_{j2}$ have rank $N+1$.
The conditions defining $k_j, k'_j, l_j, l'_j$  imply:
\begin{align*}
&0\le A'\le A\ \
 \text{and}\  \| A-A'\| <\frac{\epsilon}{2}.\\
& \tr(B'_{i1})=N(k_i/N)=k_i\quad\text{and} \quad   \tr(B'_{i2})=(N+1) k'_i/(N+1)= k'_i.\\
&\tr(C'_{j1})=N(l_j/N)= l_j \quad\text{and} \quad
\tr(C'_{j2})=(N+1)l'_j/(N+1)=l'_j.
 \end{align*}

Since
$$\tr(B'_{i1})=k_i>N =\textup{rank} (B'_{i1})\ \ \text{and }\ \ \tr(B'_{i2})=k'_i>N+1 =\textup{rank}
 (B'_{i2}),$$
  by  Proposition \ref {prop:Larsonetal} each $B'_{i1}$ and $B'_{i2}$ is a sum of projections.
  If $n=0$,  $A'=\sum_{i=1}^m B'_{i1}\oplus B'_{i2}$ is a sum
  of projections and then setting $A_1=A'$ and $A_2=A-A'$ will satisfy the thesis.

 From now on assume   $n\ge 1$. Since $\sum_{i=2}^m B'_{i1}\oplus
B'_{i2}$ is a sum of projections, it is enough to prove that $B'_1+
\sum_{j=1}^n C'_j$ is also  a sum of projections. Let $e_{11}$
(resp., $e_{12}$) be the identity of the copy of $\M_N$ (resp.,
$\M_{N+1}$) in $\B_1$. Then $B'_1= \frac{k_1}{N}e_{11}+
\frac{k'_1}{N+1}e_{12}.$ Since $1<\frac{k_1}{N} $, there exists for
each $1\le j\le n$ an integer multiple of $N$, say $L_j  $, such
that
$$L_j\big(\frac{k_1}{N}-1) \geq  N-l_j .$$
For every $1\le j\le n$, let $f_{j1}$  be a minimal projection of $C'_{j1}$. Denote by $N\cdot  f_{j1}$ the identity of the copy of $\M_N$ in $\C_j$. Then $C'_{j1}= \frac{l_j}{N}(N\cdot  f_{j1})$. Since the corner
$e_{11}\A e_{11}$ of $\A$  is still simple and purely infinite, one can
recursively find $\sum_{j=1}^n L_j$ mutually orthogonal projections
in $e_{11}\A e_{11}$, where for each $1\le j\le n$,  $L_j$ of these projections are equivalent to
$f_{j1}$ and we  denote their sum  by $L_j\cdot f_{j1}$.
Then
$$
D:= \sum_{j=1}^n\frac{k_1}{N}(L_j \cdot f_{j1})+ \sum_{j=1}^n C'_{j1}= \sum_{j=1}^n\Big(\frac{k_1}{N}(L_j \cdot f_{j1})+ \frac{l_j}{N}(N\cdot  f_{j1})\Big)
$$
and for  each $j$, $ \frac{k_1}{N}(L_j \cdot f_{j1})+ \frac{l_j}{N}(N\cdot  f_{j1})$ is a matrix of size $L_j+N$ and trace $\frac{L_jk_1}{N}+ l_j\ge L_j+N
$ and hence is the sum of projections by Proposition \ref {prop:Larsonetal}.

Similarly, there exists for
each $1\le j\le n$ an integer multiple of $N+1$, say $L'_j  $, such
that
$$L'_j\big(\frac{k_1}{N+1}-1\big) \geq N+1-l'_j .$$
For every $1\le j\le n$, let $f_{j2}$ be a minimal projection of $C'_{j2}$, $(N+1)\cdot f_{j2}$ the identity of the copy of $\M_{N+1}$ in $\C_j$ $L'_j\cdot f_{j2}$ the sum of orthogonal subprojections of $e_{12}$ equivalent to $f_{j2}$ so that $L'_j\cdot f_{j2}$ are also mutually orthogonal.
Then
$$
D':= \sum_{j=1}^n\frac{k_1}{N+1}(L'_j \cdot f_{j2})+ \sum_{j=1}^n C'_{j2}= \sum_{j=1}^n\Big(\frac{k_1}{N+1}(L'_j \cdot f_{j1})+ \frac{l'_j}{N+1}((N+1)\cdot  f_{j1})\Big)$$
is  the sum of projections by the same argument as for $D$.

Finally, let $e'=e_1 -\sum_{j=1}^n(L_j \cdot f_{j1})-\sum_{j=1}^n(L'_j \cdot f_{j})$. Then
$$
B'_1+   \sum_{j=1}^n C'_j= D+D'+ \frac{k_1}{N}e'.
$$

If $e'\ne0$, by the same argument as  for the case $n=0$ one can find a sum of projections $D''$ for which
$\|D''-\frac{k_1}{N}e' \| < \frac{\epsilon}{2}.$
Then setting $$A_1:= \sum_{i=2}^m B'_{i1}\oplus B'_{i2}+ D+D'+D''$$ and $A_2:=A-A_1$ satisfies the thesis.
\end{proof}
\

\

\section{\bf  The cases $A\in \A $ and $\|A\|_{ess} >1$ }
\setcounter{footnote}{300}

We first discuss  when a positive operator $A$ in a $\sigma$-unital
simple purely infinite $C^*$-algebra $\A$ is a strict sum of
projections in $\A$.

\begin{prop}\label{lem:idealcase}
Let $\A$ be a  $\sigma$-unital  $C^*$-algebra with an approximate
identity of projections and $A $ be a positive element in $ \A $. If
$A$ is the strict sum of projections belonging to $\A $, then $A$
must be the sum of finitely many projections belonging to $\A$.
\end{prop}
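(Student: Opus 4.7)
The plan is to set up the standard tail-of-series notation and then reduce the proposition to the following single assertion: a decreasing sequence of positive elements of $\A$ that converges strictly to $0$ in $\Mul(\A)$ must converge to $0$ in norm. Write the given strict sum as $A = \sum_{n\ge 1} p_n$, and set $S_N := \sum_{n=1}^N p_n$, $T_N := A - S_N$. Strict convergence of $S_N$ to $A$ gives strict convergence of $T_N$ to $0$, and since partial sums of the tail satisfy $\sum_{k=N+1}^M p_k \ge 0$, passing to the strict limit yields $T_N \ge 0$. Because $0 \le T_N \le A \in \A$ and $\A$ is a closed two-sided (hence hereditary) ideal of $\Mul(\A)$, each $T_N$ in fact lies in $\A$. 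Moreover $T_N - T_{N+1} = p_{N+1} \ge 0$, so $(T_N)$ is decreasing and dominates $p_{N+1}$; hence once we show $\|T_N\| \to 0$, the bound $\|T_N\| \ge \|p_{N+1}\| \in\{0,1\}$ forces $p_n = 0$ for all but finitely many $n$.

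The content of the proposition is therefore the auxiliary lemma above, and to prove it I would fix $\epsilon > 0$ and use the approximate identity of projections in $\A$ to choose a projection $e \in \A$ with $\|B_1 - eB_1\| < \epsilon$, where $B_n := T_n$ and $B_1 = T_1 \in \A$. This gives $\|(1-e)B_1(1-e)\| < \epsilon$. Since $0 \le B_n \le B_1$, compression yields $\|(1-e)B_n(1-e)\| < \epsilon$ uniformly in $n$; and using $B_n^2 \le \|B_1\|B_1$ together with the $C^*$-identity $\|(1-e)B_n\|^2 = \|(1-e)B_n^2(1-e)\|$, I obtain $\|(1-e)B_n\| \le \sqrt{\|B_1\|\epsilon}$, again uniformly in $n$. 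A routine $2\times 2$ block decomposition of $B_n$ with respect to $e$ and $1-e$ then produces
\[
\|B_n - eB_ne\| \;\le\; 2\|(1-e)B_n\| + \|(1-e)B_n(1-e)\| \;\le\; 2\sqrt{\|B_1\|\epsilon} + \epsilon.
\]

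To close the argument, I would invoke strict convergence of $B_n$ to $0$: since $e \in \A$, it gives $\|eB_ne\| \to 0$, and hence $\limsup_n \|B_n\| \le 2\sqrt{\|B_1\|\epsilon} + \epsilon$. Letting $\epsilon \to 0$ yields $\|B_n\| \to 0$, proving the lemma and therefore the proposition.

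The main obstacle is the uniform corner approximation $B_n \approx eB_ne$, which converts the one-sided bound $\|(1-e)B_1\| < \epsilon$ into a two-sided estimate valid for \emph{all} $n$ simultaneously, using only the domination $B_n \le B_1$. The hypothesis that $\A$ has an approximate identity of projections is essential precisely here: a general decreasing net of positive elements in a $C^*$-algebra converging strictly to $0$ need not converge in norm, so the projection $e$ is what allows the $C^*$-identity for $\|(1-e)B_n\|^2$ to be combined cleanly with $B_n \le B_1$.
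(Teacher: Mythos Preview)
Your argument is correct and takes a genuinely different route from the paper. The paper argues by contradiction: it fixes a projection $e_N$ from the approximate identity with $\|(1-e_N)A(1-e_N)\|<1/2$, then uses a projection-perturbation lemma (\`a la \cite[III.3.1]{Davidson}) to replace some late $p_K$ by a nearby projection $q$ lying in the corner $(1-e_N)\A(1-e_N)$; the modified sum $B$ then satisfies both $\|(1-e_N)B(1-e_N)\|<1$ and $(1-e_N)B(1-e_N)\ge q$, a contradiction. Your approach instead isolates the clean auxiliary fact that a decreasing sequence in $\A^+$ which is strictly null must be norm-null, and deduces the result directly from $\|T_N\|\ge\|p_{N+1}\|$. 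This avoids the perturbation lemma entirely and yields a reusable statement; the paper's argument is more ad hoc but perhaps makes the role of the single ``bad'' projection $p_K$ more vivid. One small remark: your closing comment overstates where the projection hypothesis enters. Every step of your lemma---the compression inequality $(1-e)B_n(1-e)\le (1-e)B_1(1-e)$, the $C^*$-identity $\|(1-e)B_n\|^2=\|(1-e)B_n^2(1-e)\|$, and the $2\times2$ block estimate---goes through verbatim for any positive contraction $e$ in an approximate identity, so your lemma (and hence the proposition) in fact holds for an arbitrary $\sigma$-unital $\A$. The paper's proof, by contrast, does use that $e_N$ is a projection (to form the hereditary corner $(1-e_N)\A(1-e_N)$ in which $q$ must live), so your route is in this sense slightly more general.
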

\begin{proof}
We will reason by contradiction. Assume that $\{ p_k
\}_{k=1}^{\infty}$ is an infinite sequence of \emph{nonzero}
projections in $\A$ such that $A = \sum_{k=1}^{\infty} p_k$, where
the sum converges in the strict topology in $\Mul(\A)$.

Let $\{ e_n \}_{n=1}^{\infty}$ be an approximate unit for $\A$
consisting of an increasing sequence of projections. Note that such
an  increasing approximate identity of projections indeed exists in
$\A$ (\cite{Zhang5}).
  Choose an integer $N \geq 1$ such
that for all $n \geq N$, $\|A-e_nA\|< 1/2$. As a consequence,
$$
 \| (1 - e_N) A (1 - e_N) \|
 =  \| (A - e_N A)( 1- e_N) \|
 \leq   \| A - e_N A \|
 < 1/2
$$
Recall a classical result (for example, see \cite[Lemma
III.3.1]{Davidson}) that for every $0<\epsilon <1$ there exists a
$\delta > 0$
  such that $p \in \A$ with dist$(p,(1 - e_N) \A (1 - e_N)) < \delta$ implies the existence of a projection $q
\in (1 - e_N) \A (1 - e_N)$   satisfying  $\| p - q \| < \epsilon$.
Such a projection $q$ is equivalent to $p$ in $\A $. For $\epsilon =
1/2$  there exists  $\delta > 0$. Since $\sum_{k=1}^{\infty} p_k$
converges in the strict topology on $\Mul(\A )$, let $K \geq 1$ be
such that $\|p_ke_N\|< \delta/3$ for all $k \geq K$. Hence, for all
$k \geq K$,
$$\| (1 - e_N) p_k (1 - e_N) - p_k \|
\leq \| -p_k e_N - e_N p_k + e_N p_k e_N \| \leq 3 \| p_k e_N \|  <
\delta$$ Thus dist$(p_K,  (1 - e_N) \A (1 - e_N))< \delta$. It
follows from the classical result stated above that there is a
projection
\begin{equation*}
 q \in (1 - e_N) \A
(1 - e_N)\quad\text{with} \quad \| p_K - q \| < 1/2.
\end{equation*}
Now let  $B = \sum_{k \neq K} p_k  +  q$.  Then
$$\| B - A \| = \| q - p_K \| < 1/2,$$
and hence,
$$\| (1 - e_N) B(1 - e_N) - (1 - e_N) A (1 - e_N) \| < 1/2.$$
Applying  the triangle inequality, one has
 $$\| (1 - e_N) B (1 - e_N) \| < 1/2 + \| (1 - e_N) A (1 - e_N) \|< 1.$$
On the other hand, $(1 - e_N) B (1 - e_N) \geq q $ implies $\|(1 -
e_N) B (1 - e_N)\| \geq 1$, a contradiction. Therefore,   $A$, as the
strict sum of projections, must be a finite sum.
\end{proof}

   We now turn to handle the sufficient condition $\|A\|_{ess}>1$.
Let us first review some elementary facts about the essential norm,
which are  formulated only for the special cases that we will work
with. Let $\A$ be a non-unital $C^*$-algebra, let $\pi$ be the
canonical homomorphism from $\Mul(\A)$ onto the corona algebra $\Mul
(\A)/{\A}$, and for every $A\in \Mul(\A)$, let $\|A\|_{ess}:=
\|\pi(A)\|$ denote the essential norm.
\begin{lem} \label{L:ess norm}
Let $\A$ be a non-unital $C^*$-algebra.
\item[(i)]
For every positive $A\in \Mul (\A)$, $$\|A\|_{ess}= \inf \{\|A(I-a)\| \mid a\in \A^+, \|a\|\le 1\}.$$
\item[(ii)]
Let $A\in \Mul (\A)\setminus \A$ be a positive element, and let $
a_n$    be a monotone increasing sequence of positive elements of
$\A $ converging to $A$ in the strict topology. Then $$\|A\|_{ess}=
\inf_n \|A-a_n\|.$$
\end{lem}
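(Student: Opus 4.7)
For \emph{(i)} I plan a standard argument. The inequality $\|A(I-a)\|\ge\|A\|_{ess}$ for $a\in\A^+$ with $\|a\|\le 1$ is immediate: $Aa\in\A$ by the ideal property, so $\pi(A(I-a))=\pi(A)$. For the reverse inequality, I test against an increasing contractive positive approximate identity $(e_\lambda)$ of $\A$: for each $b\in\A$,
\[
\|A(I-e_\lambda)\|\le\|A-b\|+\|b(I-e_\lambda)\|=\|A-b\|+\|b-be_\lambda\|,
\]
and $\|b-be_\lambda\|\to 0$, so $\limsup_\lambda\|A(I-e_\lambda)\|\le\|A-b\|$; taking the infimum over $b\in\A$ yields $\le\|A\|_{ess}$.

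For \emph{(ii)} the lower bound $\|A-a_n\|\ge\|A\|_{ess}$ follows similarly from $a_n\in\A$. For the upper bound, first observe that strict convergence of the monotone sequence $(a_n)$ forces $a_n\le A$: in a faithful nondegenerate representation, $\langle a_n\xi,\xi\rangle$ is increasing and converges to $\langle A\xi,\xi\rangle$ on the dense set $\xi=b\eta$ with $b\in\A$, uniform boundedness of $(a_n)$ (via the uniform boundedness principle) extends this pointwise to all $\xi$. Hence $B_n:=A-a_n$ is positive and $\|B_n\|$ is decreasing in $n$. The key step is a positive splitting using the same approximate identity: writing $I=e_\lambda+(I-e_\lambda)$ and sandwiching with $B_n^{1/2}$ gives
\[
B_n=B_n^{1/2}e_\lambda B_n^{1/2}+B_n^{1/2}(I-e_\lambda)B_n^{1/2},
\]
a sum of two positive elements, and applying the $C^*$-identity $\|XX^*\|=\|X^*X\|$ to each summand yields
\[
\|B_n\|\le\|e_\lambda^{1/2}B_ne_\lambda^{1/2}\|+\|(I-e_\lambda)^{1/2}B_n(I-e_\lambda)^{1/2}\|.
\]

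The second summand is dominated by $\|(I-e_\lambda)^{1/2}A(I-e_\lambda)^{1/2}\|$ because $B_n\le A$, and a variant of the argument of (i) -- using the crucial auxiliary estimate $\|(I-e_\lambda)^{1/2}b\|^2=\|b^*(I-e_\lambda)b\|\to 0$ for $b\in\A$ in place of $\|b-be_\lambda\|\to 0$ -- shows this quantity tends to $\|A\|_{ess}$ as $\lambda\to\infty$. The first summand goes to $0$ as $n\to\infty$ for each fixed $\lambda$: since $e_\lambda^{1/2}\in\A$, strict convergence gives $\|e_\lambda^{1/2}(A-a_n)\|\to 0$, hence $\|e_\lambda^{1/2}B_ne_\lambda^{1/2}\|\to 0$. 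Given $\epsilon>0$, I first choose $\lambda$ so the second summand is below $\|A\|_{ess}+\epsilon/2$, then choose $n$ large enough so the first is below $\epsilon/2$, yielding $\|A-a_n\|<\|A\|_{ess}+\epsilon$ and therefore $\inf_n\|A-a_n\|=\|A\|_{ess}$. I expect the main obstacle to be the ``two-sided'' limit $\|(I-e_\lambda)^{1/2}A(I-e_\lambda)^{1/2}\|\to\|A\|_{ess}$: although conceptually parallel to (i), it depends on the symmetric estimate for $\|(I-e_\lambda)^{1/2}b\|$ rather than the one-sided $\|b-be_\lambda\|$.
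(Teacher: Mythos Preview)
Your argument is correct. The approaches differ in flavor but share the same core mechanism.

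For (i), the paper does not use an approximate identity at all: it constructs a single element $a=h(A)$ via continuous functional calculus, where $h$ is a cut-off function vanishing on $[0,\|A\|_{ess}]$, and reads off $\|A(I-h(A))\|\le\|A\|_{ess}+\epsilon$ from the sup norm of $t(1-h(t))$. Your approximate-identity estimate $\|A(I-e_\lambda)\|\le\|A-b\|+\|b(I-e_\lambda)\|$ is more elementary and avoids spectral considerations; the paper's version, on the other hand, produces an explicit near-minimizer $a$ in $C^*(A)\cap\A$.

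For (ii), both proofs rest on the same two ingredients: the inequality $0\le A-a_n\le A$ (which you justify explicitly via a faithful representation, while the paper simply asserts it) and a square-root splitting. The paper splits $(A-a_n)^{1/2}=(A-a_n)^{1/2}a+(A-a_n)^{1/2}(I-a)$, bounds $\|(A-a_n)^{1/2}(I-a)\|^2\le\|A^{1/2}(I-a)\|^2$ via $A-a_n\le A$, and then invokes (i) applied to $A^{1/2}$. You instead sandwich $e_\lambda$ and $I-e_\lambda$ between copies of $B_n^{1/2}$, obtaining the additive bound $\|B_n\|\le\|e_\lambda^{1/2}B_ne_\lambda^{1/2}\|+\|(I-e_\lambda)^{1/2}B_n(I-e_\lambda)^{1/2}\|$ and handling each term directly. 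The organization is different---the paper reduces to (i) for $A^{1/2}$, you run a self-contained $\epsilon/2$ argument---but the underlying operator inequalities are the same. Your anticipated obstacle, the two-sided limit $\|(I-e_\lambda)^{1/2}A(I-e_\lambda)^{1/2}\|\to\|A\|_{ess}$, is indeed handled by the estimate $\|(I-e_\lambda)^{1/2}b\|^2\le\|b\|\,\|b-e_\lambda b\|\to 0$ exactly as you indicate.
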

\begin{proof}
\item [(i)]  Since $ Aa\in \A$ for every $a\in \A$, it follows that  $ \|A\|_{ess}= \|A(I-a)\|_{ess}\le  \|A(I-a)\| $ and hence $$\|A\|_{ess}\le \inf \{\|A(I-a)\| \mid a\in \A^+, \|a\|\le 1\}.$$  If $\|A\|_{ess}= \|A\|$, then the reverse inequality holds by choosing $a=0$, so assume that $\|A\|> \|A\|_{ess}$.  Let  $0< \epsilon< \|A\|- \|A\|_{ess}$, let $h$ be the positive continuous function on the interval $[0, \|A\|]$ defined as
$$
h(t):= \begin{cases}
0 &t\in [0, \|A\|_{ess}]\\
\text{linear} \quad & t\in[\|A\|_{ess},\|A\|_{ess}+\epsilon]\\
1 & t\in[\|A\|_{ess}+\epsilon, \|A\|]
\end{cases},
$$
and let $a:= h(A)$. Clearly, $a\ge 0$ and $\|a\|=1$. Via the Gelfand's transformation,  identify $C^*(\pi (A))$ with the algebra of complex-valued continuous functions
$C(\sigma _e(A))$ defined on the essential spectrum $ \sigma _e(A)$ of $A$. Since $h$ vanishes on $ \sigma _e(A)$ and $h\circ \pi = \pi\circ h$, it follows that $\pi(h(A))=0$ and hence $h(A)\in \A$. Moreover,
$$\|A(I-a)\|= \|t(1-h(t))\|_\infty \le \|A\|_{ess}+\epsilon,$$ whence
$$\inf \{\|A(I-a)\| \mid a\in \A^+, \|a\|\le 1\}\le  \|A\|_{ess}.$$
Thus  equality holds, proving (i).
\item[(ii)]
Since for every $n$
$$
\|A\|_{ess}=  \|A-a_n\|_{ess}\le \|A-a_n\|,
$$ it follows that $$ \|A\|_{ess} \le \inf_n \|A-a_n\|.$$  For every positive contraction $a\in \A$ and every $n$
$$
\|A-a_n\|^{1/2}= \|(A-a_n)^{1/2}\|\le \|(A-a_n)^{1/2}a\|+\|(A-a_n)^{1/2}(I-a)\|.
$$
Since  $0\le A-a_n\le A$,
$$
\|(A-a_n)^{1/2}(I-a)\|^2= \|(I-a)(A-a_n)(I-a)\|\le \|(I-a)A(I-a)\| = \|A^{1/2}(I-a)\|^2.$$
But then
$$
 \|A^{1/2}(I-a)\|\ge \|(A-a_n)^{1/2}(I-a)\|\ge \|A-a_n\|^{1/2}- \|(A-a_n)^{1/2}a\|.
$$
Since $A-a_n\to0$ in the strict topology it follows that $\|(A-a_n)^{1/2}a\|\to0$. Since $a_n$ is monotone increasing, it follows that $\|A-a_n\|^{1/2}\to \inf \|A-a_n\|^{1/2}$ and hence $$ \|A^{1/2}(I-a)\|\ge \inf_n \|A-a_n\|^{1/2}.$$
Thus
$$\inf \{\|A^{1/2}(I-a)\| \mid a\in \A^+, \|a\|\le 1\}\ge \inf_n \|A-a_n\|^{1/2}$$
and by (i), $$ \|A^{1/2}\|_{ess}\ge \inf_n \|A-a_n\|^{1/2}.$$
Since $\|A\|_{ess} = \|A^{1/2}\|_{ess}^2 $, it follows that
$$\|A\|_{ess}\ge \inf_n \|A-a_n\|, $$ which concludes the proof.
\end{proof}
\begin{cor}\label{C:sums}
Let $\A$ be a non-unital $C^*$-algebra and let $A=\sum_{j=i}^\infty a_j$ where $a_j\in\A^+$, $\|a_j\|\ge 1$ for all $j$  and the series converges in the strict topology of $\Mul(\A)$.  Then $\|A\|_{ess}\ge 1.$
\end{cor}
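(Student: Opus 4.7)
The plan is to apply Lemma \ref{L:ess norm}(ii) to the monotone sequence of partial sums $s_n := \sum_{j=1}^n a_j$. These lie in $\A^+$, form an increasing sequence (since each $a_j \geq 0$), and converge to $A$ in the strict topology by hypothesis.

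The core estimate I would establish is $A - s_n \geq a_{n+1}$ in $\Mul(\A)$. For any $N > n$, the finite sum $\sum_{j=n+1}^N a_j$ dominates $a_{n+1}$ in the positive cone. These finite sums converge strictly to $A - s_n$ as $N \to \infty$, and the inequality $c^*(s_N - s_n)c \geq c^* a_{n+1} c$, valid for every $c \in \A$, passes to norm limits; so $A - s_n \geq a_{n+1}$ in $\Mul(\A)$. Taking norms yields $\|A - s_n\| \geq \|a_{n+1}\| \geq 1$ for every $n$.

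To invoke Lemma \ref{L:ess norm}(ii) I must first verify that $A \in \Mul(\A) \setminus \A$, since otherwise the lemma as stated does not apply. If $A \in \A$, pick an approximate unit $\{e_\lambda\}$ of $\A$ so that $\|A(I - e_\lambda)\|$ is arbitrarily small. Combining $0 \leq A - s_n \leq A$ with the operator inequality $(A - s_n)^2 \leq \|A - s_n\|(A - s_n) \leq \|A\| A$ and the $C^*$-identity $\|T\|^2 = \|T^*T\|$, I would bound
\[
\|(A - s_n)(I - e_\lambda)\|^2 \leq \|A\|\cdot\|(I - e_\lambda) A (I - e_\lambda)\| \leq \|A\| \cdot \|A(I - e_\lambda)\|,
\]
and together with $\|(A - s_n)e_\lambda\| \to 0$ (strict convergence, $\lambda$ fixed) this would force $s_n \to A$ in norm, contradicting $\|A - s_n\| \geq 1$. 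Hence $A \notin \A$, and Lemma \ref{L:ess norm}(ii) delivers $\|A\|_{ess} = \inf_n \|A - s_n\| \geq 1$. The only mild obstacle is the verification that $A \notin \A$, required only to match the hypothesis of the lemma; the heart of the argument is the single order inequality $A - s_n \geq a_{n+1}$.
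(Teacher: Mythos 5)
Your proof is correct and follows exactly the route the paper intends: the corollary is stated without proof as an immediate consequence of Lemma \ref{L:ess norm}(ii) applied to the increasing partial sums $s_n$, together with the order inequality $A-s_n\ge a_{n+1}$, which is precisely your argument. Your additional verification that $A\notin\A$ (needed to invoke the lemma as literally stated) is a point the paper glosses over, and your argument for it is sound.
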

Every  $\sigma$-unital $C^*$-algebra $\A$ has a strictly positive
element $b\in  \A $, i.e.,  a positive element for which $(b\A
)^-=(\A b)^- = \A$. As usual, one can assume that $\|b\|=1$. Define
a seminorm on $\Mul(\A
 )$, say $\| . \|_b$,   by $$\| m \|_b:= \| mb \| + \| b m
\|\ \ \text{for all }\ \ m \in \Mul(\A  ).$$ Clearly, $\| . \|_b$
generates the strict topology on $\Mul(\A  )$.  Note that $\|
m\|_b\le 2\| m\| $ for all $m\in \Mul(\A )$.
\begin{prop}\label{P:ess>1}
Let $\A$ be a $\sigma$-unital non-unital   purely infinite simple
$C^*$-algebra  and let $A\in \Mul(\A)$ be a positive element with
$\|A\|_{ess}>1$. Then $A$ is a strict sum of projections.
\end{prop}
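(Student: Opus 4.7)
The plan is to construct inductively an increasing sequence $0=P_0\le P_1\le P_2\le\cdots\le A$ of finite sums of projections in $\A^+$, with each increment $P_n-P_{n-1}$ itself a finite sum of projections in $\A$, so that $P_n\to A$ in the strict topology. Enumerating the individual projections in the order they appear in successive increments yields $\{q_k\}\subset\A$ whose partial sums $S$ always satisfy $P_{n-1}\le S\le P_n$ for some $n$. Using the positive-operator inequality $X^2\le\|X\|Y$ whenever $0\le X\le Y$, one gets $\|(A-S)b\|^2\le\|A\|\,\|b\|\,\|(A-P_{n-1})b\|\to 0$, and similarly on the other side, proving $\sum q_k=A$ strictly.

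For the inductive step, set $B:=A-P_{n-1}\in\Mul(\A)^+$. Since $P_{n-1}\in\A$, $\|B\|_{ess}=\|A\|_{ess}>1$. Let $\{e_k\}\subset\A$ be an increasing approximate unit of projections, available since $\A$ is $\sigma$-unital and of real rank zero \cite{Zhang5}, and form $a_k:=B^{1/2}e_k B^{1/2}$. Because $B^{1/2}\in\Mul(\A)$ and $\A$ is an ideal, $a_k\in\A^+$; the sequence is monotone with $a_k\le B$ and converges strictly to $B$ (using $B^{1/2}b\in\A$). A pointwise computation $\langle a_k\xi,\xi\rangle\uparrow\langle B\xi,\xi\rangle$ in a faithful nondegenerate representation of $\Mul(\A)$ shows $\|a_k\|\uparrow\|B\|\ge\|A\|_{ess}>1$, so we may fix $k$ with $\|a_k\|>1$ and apply Lemma \ref{lem:finitespectrum} to $a_k$ with tolerance $\epsilon_n$, writing $a_k=P+R$ with $P$ a finite sum of projections (so $P\le a_k\le B$) and $\|R\|<\epsilon_n$. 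Set $P_n:=P_{n-1}+P\le A$.

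The residual splits as $A-P_n = R + B^{1/2}(I-e_k)B^{1/2}$, so for the fixed strictly positive contraction $b\in\A^+$,
$$
\|(A-P_n)b\|+\|b(A-P_n)\|\;\le\;2\epsilon_n + \|B^{1/2}\|\bigl(\|(I-e_k)B^{1/2}b\|+\|bB^{1/2}(I-e_k)\|\bigr).
$$
Since $B^{1/2}b$ and $bB^{1/2}$ lie in $\A$ and $\{e_k\}$ is an approximate unit, the two norms involving $I-e_k$ tend to $0$ as $k$ grows; enlarging $k$ and shrinking $\epsilon_n$ forces the right-hand side below $2^{-n}$, closing the induction.

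The main obstacle is that the inductive step requires, at the \emph{same} truncation level $k$, both $\|a_k\|>1$ (needed to invoke Lemma \ref{lem:finitespectrum}) and smallness of the truncation residual $B^{1/2}(I-e_k)B^{1/2}$ in strict seminorm. These rest on independent facts---monotone convergence $\|a_k\|\uparrow\|B\|$ versus norm convergence $e_kc\to c$ for $c\in\A$---and both improve with $k$, so they can be arranged simultaneously by choosing $k$ large enough. The essential-norm hypothesis $\|A\|_{ess}>1$ is used precisely to guarantee that the first condition can ever be met, independently of the history $P_{n-1}\in\A$.
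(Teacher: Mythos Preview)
Your argument is correct, and its ingredients are the same as the paper's---an approximate identity of projections, the truncations $B^{1/2}e_kB^{1/2}$, and iterated applications of Lemma~\ref{lem:finitespectrum}---but the organization differs. The paper first uses Lemma~\ref{L:ess norm}(ii) to show that every tail $\|\sum_{j\ge n}A^{1/2}q_jA^{1/2}\|\ge\|A\|_{ess}$, groups consecutive terms into blocks $a_k$ with $\|a_k\|>1$, and then applies Lemma~\ref{lem:finitespectrum} with a carry-over of the residual $b_k$ from step $k$ to $b_k+a_{k+1}$ (which still has norm $>1$). You instead work directly with the running remainder $B=A-P_{n-1}$, replacing the tail-norm estimate of Lemma~\ref{L:ess norm}(ii) by the elementary monotone fact $\|B^{1/2}e_kB^{1/2}\|\uparrow\|B\|\ge\|B\|_{ess}>1$; the price is that you must separately argue strict convergence of the \emph{intermediate} partial sums via the inequality $(A-S)^2\le\|A\|(A-P_{n-1})$, whereas in the paper's scheme the partial sums naturally track $\sum_{k\le n}a_k$. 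Both routes are short; yours sidesteps Lemma~\ref{L:ess norm} entirely, while the paper's packaging into the form $A=\sum a_k$ with $\|a_k\|>1$ is reusable (see Remark~\ref{R:sums}) and is in fact invoked again in the proof of Proposition~\ref{P:ess=1}.
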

\begin{proof}
Every $\sigma$-unital, non-unital $C^*$-algebra of real rank zero
has an approximate identity of projections; such an approximate
identity can always be chosen to be  countable and increasing, say
$\{e_j\}$ (\cite{Zhang5}).

Let $q_j=e_{j}-e_{j-1}$ setting $e_o=0$. Then $\sum_{j=1}^\infty
q_j=I$, where the convergence is in the strict topology.
Furthermore,
$$
A= \sum_{j=1}^\infty A^{1/2}q_jA^{1/2}
$$
where the convergence is also in the strict topology.  By Lemma \ref {L:ess norm} (ii),
$$\|\sum_{j=n}^\infty A^{1/2}q_jA^{1/2}\|\ge \|A\|_{ess}$$
for every $n$. Thus the condition $\|A\|_{ess}>1$ allows us to find
a strictly increasing sequence of integers $n_k$ starting with
$n_0=1$ such that
$$
\|\sum_{j=n_{k-1}}^{n_{k}-1} A^{1/2}q_kA^{1/2}\|> 1$$ for every $k$.
Let
$$a_k:= \sum_{j=n_{k-1}}^{n_{k}-1} A^{1/2}q_kA^{1/2}.$$
Then $a_k$ is a positive element in $ \A^+$ with $\|a_k\|>1$ for
every $k$ and $A= \sum_{k=1}^\infty a_k$ in the strict topology.
Thus
$$
\| \sum_{k=n}^\infty a_k\|_b\to0.
$$
Apply Lemma \ref{lem:finitespectrum} to $a_1$ to obtain a finite sum of projections $d_1\in \A$, $d_1\le a_1$ with
 $$\|d_1-a_1| < \frac{1}{2}\|\sum_{k=2}^\infty a_k\|_b.$$
Let $b_1:=a_1-d_1\in \A^+$  and hence
$\|b_1\|_b\le\|\sum_{k=2}^\infty a_k\|_b$. Then  $A-d_1= b_1+
\sum_{k=2}^\infty a_k$, and hence,
$$
\|A-d_1\|_b\le \|b_1\|_b + \|\sum_{k=2}^\infty a_k\|_b\le 2\|\sum_{k=2}^\infty a_k\|_b.
$$
 Next, since $b_1+a_2\in \A^+$ and $\|b_1+a_2\|\ge \|a_2\|>1$, we can apply Lemma \ref {lem:finitespectrum} to $b_1+a_2$ to obtain a
 finite sum of projections $d_2\le b_1+a_2$ with $$\|b_1+a_2-d_2\|\le \frac{1}{2}\|\sum_{k=3}^\infty a_k\|_b.$$
  Thus, iterating, we can find for each $k$ a finite sum $d_k$ of  projections in $\A$ so that
  $$
\| A-\sum_{k=1}^nd_k \|\le 2 \|\sum_{k=n+1}^\infty a_k\|_b \to 0.
$$
  This proves that the sum $ \sum_{k=1}^\infty d_k$ converges to $A$ in the strict topology,
  and hence that $A$ is a strict sum of projections, as claimed.
  \end{proof}

  \begin{rem}\label{R:sums}
  In the course of the above proof we have proven that if $\A$ is a $\sigma$-unital
  non-unital   purely infinite simple
  $C^*$-algebra, and $A=\sum _{k=1}^\infty a_k$ in the strict topology,
   where $a_k\in \A^+$ and $\|a_k\|>1$ for all $k$, then $A$ is a strict sum of
   projections. The condition "purely infinite and simple" is the
   key assumption for the conclusion to hold in the eyes of key Lemma 2.5.
  \end{rem}

\section{\bf The case $\|A\|_{ess}=1$ and $\|A\|>1$ }
The objective of this section is to prove that $\|A\|_{ess}=1$ and
$\|A\|>1$ suffice to have $A$ written as a strictly converging sum
of projections in $\A $. We start with some technical preparations.

\begin{lem}  Let $\A$ be any $C^*$-algebra of  real rank zero
and $A$ be a positive element in $\Mul(\A  )$ such that $\| A
\|_{ess} = 1$ and $\| A \| > 1$. Then there exist a positive element
$A'\in \Mul(\A  )$, a real
number $\lambda
> 1$, and a nonzero projection $p\in \A  $   such that
\item [(i)] $\| A' \|_{ess} =1,$
\item [(ii)] $A' p = p A' = 0,$
\item [(iii)] $A' + \lambda p \leq A.$
\label{lem:bigp}
\end{lem}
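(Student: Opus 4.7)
The plan is to pick a nonzero projection $p\in\A$ and a scalar $\lambda>1$ satisfying the structural inequality $\lambda p\leq A$, and then define $A'$ by a Schur-complement formula in the $(p,1-p)$ block decomposition of $A-\lambda p$; conditions (i)--(iii) will then all fall out of that single inequality.

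To produce $p$ and $\lambda$, I would fix any $1<\lambda<\mu<\|A\|$ (possible because $\|A\|>1$). Since $\sigma(\pi(A))\subseteq[0,1]$, continuous functional calculus gives $\pi((A-\mu)_+)=(\pi(A)-\mu)_+=0$, so $(A-\mu)_+\in\A$, and it is nonzero because $\mu<\|A\|$. Since $\A$ has real rank zero, the hereditary subalgebra $\overline{(A-\mu)_+\A(A-\mu)_+}$ inherits real rank zero and therefore contains a nonzero projection $p$. In the enveloping von Neumann algebra, $p$ lies under the support projection $s=\chi_{(\mu,\infty)}(A)$ of $(A-\mu)_+$. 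Since $s$ commutes with $A$ and $sAs\geq\mu s$, the chain
\[
\lambda p\leq\mu p\leq\mu s\leq sAs\leq A
\]
yields $\lambda p\leq A$, while simultaneously $pAp=p(sAs)p\geq\mu p$, so $pAp-\lambda p\geq(\mu-\lambda)p>0$ is invertible in the unital $C^*$-algebra $p\A p$.

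With these in hand, I would set
\[
A':=(1-p)A(1-p)-(1-p)Ap\,(pAp-\lambda p)^{-1}\,pA(1-p).
\]
By construction $A'=(1-p)A'(1-p)$, which gives (ii). Property (i) holds because $p\in\A$ forces $pA$, $Ap$, $pAp$ and the middle correction term all into $\A$, so $A'\equiv A\pmod{\A}$ and hence $\|A'\|_{ess}=\|A\|_{ess}=1$. For (iii), the $(p,1-p)$ block form of $A-\lambda p-A'$ has the shape
\[
\begin{pmatrix}pAp-\lambda p & pA(1-p)\\ (1-p)Ap & (1-p)Ap\,(pAp-\lambda p)^{-1}\,pA(1-p)\end{pmatrix},
\]
which factors as $V^*V$ where $V$ is the row operator with entries $(pAp-\lambda p)^{1/2}$ and $(pAp-\lambda p)^{-1/2}pA(1-p)$, and hence is positive; the inequality $A'\geq 0$ then follows from the classical Schur-complement criterion applied to $A-\lambda p\geq 0$ with invertible top-left block.

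The main obstacle is ensuring that $\lambda p\leq A$ holds as elements of $\Mul(\A)$, not merely the weaker $pAp\geq\lambda p$: otherwise the Schur-complement formula could produce an $A'$ that fails to be positive, and no adjustment restores orthogonality to $p$. The key observation is that although the spectral projection $\chi_{(\mu,\infty)}(A)$ itself need not lie in $\A$, any projection in the hereditary subalgebra of $(A-\mu)_+$ sits below it in the bidual and therefore inherits the full inequality $\lambda p\leq A$. Once this is in hand, the Schur-complement formula delivers $A'$ with all three required properties simultaneously, and membership $p\in\A$ makes the essential-norm calculation immediate.
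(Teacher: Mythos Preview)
Your argument is correct but follows a genuinely different route from the paper's. The paper works entirely by functional calculus: it chooses two continuous functions $h_1,h_2$ on $[0,\|A\|]$ with $h_1h_2=0$ and $h_1+h_2\le \mathrm{id}$, where $h_2(t)=t$ on $[0,1+\delta/4]$ and $h_1$ is supported on $[1+\delta/2,\|A\|]$. Then $A':=h_2(A)$ automatically has $\|A'\|_{ess}=1$ and $h_1(A)\in\A$; the paper's Lemma~2.3 (approximation from below by finite-spectrum elements) extracts a projection $p$ and scalar $\lambda>1$ with $\lambda p\le h_1(A)$, and orthogonality $A'p=0$ is free because $h_1(A)$ and $h_2(A)$ commute.

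You instead locate $p$ first, inside the hereditary subalgebra of $(A-\mu)_+$, and pass through the bidual to obtain the crucial inequality $\lambda p\le A$; then $A'$ is \emph{defined} as the Schur complement of $A-\lambda p$ relative to $p$. Your $A'$ does not commute with $A$, and orthogonality to $p$ comes from the block structure rather than from functional calculus. The paper's approach is more self-contained (it uses only continuous functional calculus and the already-proved Lemma~2.3, avoiding the bidual and the Schur-complement identity), while yours has the conceptual advantage of isolating the single inequality $\lambda p\le A$ as the heart of the matter: once that holds with $pAp-\lambda p$ invertible in $p\A p$, conditions (i)--(iii) follow from general operator-matrix algebra, with real rank zero used only to produce $p$.
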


\begin{proof}
Let $\delta =  \|A\|-1$. Define two positive continuous functions $h_1(t)$ and $h_2(t)$ on $[0, \|A\|]$ as follows:

\[
h_1(t):= \begin{cases}
0\quad &t\in [0, 1+\frac{\delta }{2}]\\
\text{linear} &t\in [1+\frac{\delta }{2},1+\frac {3\delta }{4}]\\
t & t\in [1+\frac{3\delta }{4}, \|A\|]
\end{cases}\qquad\text{and} \quad  h_2(t):= \begin{cases}
t\quad &t\in [0, 1+\frac{\delta }4]\\
\text{linear} &t\in [1+\frac{\delta }4,1+\frac {\delta }2]\\
0 &t\in [1+\frac {\delta }2, \|A\|]
\end{cases}
\]

Clearly, $h_1(t)+h_2(t) \le t$ and $h_1(t)h_2(t)=0$ for all $t$, hence, $h_1(A)+h_2(A) \le A$ and $h_1(A)h_2(A)=0$.
 Let $\pi $ be the quotient map from $\Mul (\A)$ to the
corona algebra $\Mul(\A )/{\A}$.
Reasoning as in Lemma \ref{L:ess norm},  $h_1(A)\in \A$ and $ \| h_2(A)\|_{ess}=  \| A\|_{ess} =1$.
Applying Lemma \ref{lem:rr0},   approximate  $h_1(A)$
by a positive element of finite spectrum satisfying
$$\alpha _1p_1+\alpha _2 p_2+\cdots +\alpha _mp_m\le h_1(A)$$  where $p_i$ are pairwise orthogonal nonzero
projections in $\A$.  For a sufficient approximation, $\alpha _i>1$ holds for at least one $i_0$. Set $\lambda := \alpha _{i_0}$,  $p:=p_{i_0}$, and
$A'=h_2(A).$ Then (i) is satisfied. Since $\lambda p\le h_1(A)$ and hence  $A'+\lambda p \le A$, i.e., (iii) is satisfied. Since $h_1(A)A'=A' h_1(A)=0$, it follows that $A'p=pA'=0$. i.e.,  (ii) is satisfied.
\end{proof}

 The content of the following lemma  can be found in the proof of
Theorem 2.2 of \cite{ZhangSimple}.

\begin{lem}  Let $\A$ be a $\sigma$-unital, non-unital $C^*$-algebra that has
an approximate identity of projections. If  $A \in \Mul(\A  )$ is a
positive element, then for every $\epsilon
> 0$ there exist three positive
elements $A_1, A_2, A_3 \in \Mul(\A )$ and there is a self-adjoint
element $a \in \A $ with $\| a \| < \epsilon$ such that
$$A = A_1 + A_2 + A_3 + a,$$
where all $A_1, A_2, A_3 $ are in block-diagonal forms (see the
detailed descriptions in the following proof).
\label{lem:threediagonals}
\end{lem}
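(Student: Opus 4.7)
The plan is to approximate $A$ by a block tri-diagonal operator relative to a sparse subsequence of an approximate identity of projections in $\A$, then split the tri-diagonal piece into three positive block-diagonal summands. Choose an increasing approximate identity $\{e_n\}_{n\ge 0}$ of projections in $\A$ with $e_0=0$. Since $Ae_n\in\A$ for every $n$ and $\|(I-e_m)y\|\to 0$ for every $y\in\A$, an inductive selection produces integers $n_1<n_2<\cdots$ satisfying
\[
\bigl\|(I-e_{n_{k+1}})\,A\,e_{n_{k-1}}\bigr\|<\epsilon\cdot 2^{-k-3}\qquad(k\ge 1).
\]
The pairwise orthogonal projections $q_k:=e_{n_k}-e_{n_{k-1}}$ sum strictly to $I$, so $A=\sum_{i,j\ge 1}q_iAq_j$ in the strict topology.

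Split $A=T+a$ with $T:=\sum_{|i-j|\le 1}q_iAq_j$ and $a:=\sum_{|i-j|\ge 2}q_iAq_j$. For $i\ge j+2$ the inequality $q_i\le I-e_{n_{j+1}}$ gives $q_iAq_j=q_i(I-e_{n_{j+1}})A(e_{n_j}-e_{n_{j-1}})$, whose norm is controlled by the subsequence bound. Writing $a=U+U^*$ with $U:=\sum_j(I-e_{n_{j+1}})Aq_j$ and using the pairwise orthogonality of the $q_j$ in a Schur-style estimate yields $\|a\|<\epsilon$; since each finite partial sum of $a$ lies in $\A$ and the convergence is in operator norm, $a\in\A$.

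Each pairwise compression $(q_i+q_{i+1})A(q_i+q_{i+1})$ is a positive element supported on $q_i+q_{i+1}$. Three interleaved consecutive-pair partitions of $\{q_k\}$ then produce positive block-diagonal operators
\[
A_\ell:=\sum_{k\ge 0}(q_{3k+\ell}+q_{3k+\ell+1})\,A\,(q_{3k+\ell}+q_{3k+\ell+1}),\qquad \ell=1,2,3.
\]
A combinatorial count shows that $A_1+A_2+A_3$ agrees with $T$ up to a positive, block-diagonal diagonal correction supported on specific $q_n$'s; a mild adjustment of the partitions (for example, replacing one pair per overcounted diagonal with a singleton block) cancels this correction, giving $A_1+A_2+A_3=T$ and hence $A=A_1+A_2+A_3+a$. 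The crux of the argument is the analytic step: ensuring that $a$ converges in \emph{operator norm} to an element of $\A$ of norm less than $\epsilon$, which is precisely why the geometric-decay subsequence estimate is engineered for a Schur-type bound on the two triangular halves $U$ and $U^*$; the tri-diagonal bookkeeping that produces the three summands is essentially combinatorial once this analytic bound is in hand.
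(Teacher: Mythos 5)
The analytic half of your argument --- the sparse subsequence $\{e_{n_k}\}$ and the Schur-type estimate showing the off-tri-diagonal remainder lies in $\A$ with norm $<\epsilon$ --- is sound and is essentially the same device the paper uses. The gap is in the combinatorial half. Your three interleaved pair-partitions cover each off-diagonal pair $q_iAq_{i+1}+q_{i+1}Aq_i$ exactly once but cover each diagonal block $q_iAq_i$ (except at the boundary) exactly twice, so $A_1+A_2+A_3=T+\sum_i q_iAq_i$ up to boundary terms, and this overcount cannot be removed by ``replacing a pair with a singleton.'' Any family of consecutive-pair compressions that covers the off-diagonal $(i,i+1)$ must include the pair block $\{i,i+1\}$, and the one covering $(i+1,i+2)$ must include $\{i+1,i+2\}$; each of these forces a copy of $q_{i+1}Aq_{i+1}$ into the sum, so that diagonal is always counted at least twice. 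You cannot subtract the surplus from a pair compression and stay positive, because a $2\times 2$ operator matrix with vanishing $(1,1)$ entry and nonzero $(1,2)$ entry is never positive. Worse still, the tri-diagonal truncation $T$ of a positive operator need not itself be positive (the tridiagonal part of the $3\times 3$ all-ones matrix has eigenvalue $1-\sqrt{2}$), so there is no hope of writing your $T$ exactly as a sum of positive summands in general; the smallness of the discarded part does not rescue this.

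The paper sidesteps all of this by truncating $A^{1/2}$ rather than $A$. One writes $A^{1/2}=X+a_0$ with $a_0\in\A$ self-adjoint of small norm and $X$ self-adjoint and tri-block-diagonal with respect to the blocks $P_i=e_{n_i}-e_{n_{i-1}}$. Then $X^2=\sum_i XP_iX$, where each summand $XP_iX$ is \emph{automatically} positive, and grouping the indices modulo $3$ yields three positive operators $A_\ell=X\bigl(\sum_{i\equiv \ell}P_i\bigr)X$ which are block-diagonal because $XP_iX$ is supported in $P_{i-1}+P_i+P_{i+1}$. There is no double counting, and the error $a=A-X^2=A^{1/2}a_0+a_0A^{1/2}-a_0^2$ is self-adjoint, lies in $\A$, and has norm $<\epsilon$ after a suitable choice of the bound on $\|a_0\|$. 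Your subsequence construction can be reused verbatim; just apply it to $A^{1/2}$ instead of $A$ and replace the direct splitting of $T$ by the factorization $X^2=\sum_i XP_iX$.
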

\begin{proof} The details were given in the proof of
\cite[2.2]{ZhangSimple}, but we sketch them here for the convenience
of the readers.  Let $ \{e_j\}$ be an approximate unit of
projections.  For $i\ge 1$ we will view $(e_i-e_{i-1})A^{\frac
12}(e_j-e_{j-1}) $ as the $(i,j)$-entry of $A^{\frac 12}$, and view
$(e_{n_{i+1}}-e_{n_i})A^{\frac 12}(e_{n_j}-e_{n_{j-1}})$ as the
$(i,j)$-block entry of $A^{\frac 12} $.

Using a standard argument recursively on $A^{\frac 12}$ one can find
an increasing sequence of indices $\{n_i\}$ starting with $n_0=0$ such that $A^{\frac 12}$
can be rewritten as a sum $A^{\frac 12}=: X+a$ of two self-adjoint
elements, where setting $e_{n_0}=0$,
$$a=: \sum_{i=1}^\infty \{
(e_{n_i}-e_{n_{i-1}})A^{\frac
12}(1-e_{n_{i+1}})+(1-e_{n_{i+1}})A^{\frac
12}(e_{n_i}-e_{n_{i-1}})\}
 $$ satisfies  $\|a\|\le \frac{\epsilon}{2\sqrt{\|A\|}+1}$   and
$X=:A^{\frac 12}-a$.  Then
 \begin{align*}
 X=&\sum_{i=1}^\infty (e_{n_{i+1}}-e_{n_i})A^{\frac 12}(e_{n_i}-e_{n_{i-1}})\\
 +&\sum_{i=1}^\infty(e_{n_i}-e_{n_{i-1}})A^{\frac 12}(e_{n_i}-e_{n_{i-1}}) \\
+&\sum_{i=1}^\infty(e_{n_i}-e_{n_{i-1}})A^{\frac
12}(e_{n_{i+1}}-e_{n_i}),\end{align*} the second sum above can be
viewed as the main block diagonal,  the first sum (resp., last sum)
can be viewed as the first  block diagonal  below (resp., above) the
main one. In this way, $X$ is said to have a tri-block diagonal
form.

Define  \begin{align*}
& A_1=: X\sum_{i=1}^\infty (e_{n_{3i-2}}-e_{n_{3i-3}}) X,\\
& A_2=: X\sum_{i=1}^\infty (e_{n_{3i-1}}-e_{n_{3i-2}}) X,\\
& A_3=: X\sum_{i=1}^\infty (e_{n_{3i}}-e_{n_{3i-1}}) X.
\end{align*}
Clearly, $A_1+A_2+A_3=X^2= A  -A^{\frac 12}a_0-a_0A^{\frac
12}+a_0^2$ and  all three sums $A_1$, $A_2$, and $A_3$ strictly
converge to positive elements of $\Mul (\A)$. Set $a=A^{\frac
12}a_0+a_0A^{\frac 12}-a_0^2$. Then $\|a\|< \epsilon$.

Via multiplication one sees that $A_1$ is of block diagonal with
respect to the decomposition of the identity (of $\Mul (\A )$)
$$ 1 =  e_{n_3} \oplus (e_{n_6} -e_{n_3})\oplus \cdots
\oplus (e_{n_{3i} }-e_{n_{3i-3)}})\oplus \cdots ,$$ $A_2$ is of
block diagonal with respect to the decomposition
$$ 1  =  (e_{n_4}-e_{n_1}) \oplus (e_{n_7} -e_{n_4})\oplus \cdots
\oplus (e_{n_{3i+1}} -e_{n_{3i-2}})\oplus \cdots ,$$ and $A_3$ is of
block diagonal with respect to the decomposition
$$ 1 = (e_{n_5}-e_{n_2})\oplus (e_{n_8} -e_{n_4})\oplus \cdots
\oplus (e_{n_{3i+2}} -e_{n_{3i-1}})\oplus \cdots .$$
\end{proof}

\begin{lem}  Let $\A$ be a $\sigma$-unital, non-unital
$C^*$-algebra of real rank zero and  let $A \in \Mul(\A )\setminus \A$ be a positive element.
Then for every $\epsilon
> 0$ there exist a sequence $\{ q_k \}_{k=1}^{\infty}$ of pairwise
orthogonal nonzero projections in $\A$, a bounded sequence $\{
\lambda_k \}_{k=1}^{\infty}$ of positive real numbers, a positive
element $A_0\in \Mul(\A  ) $,   and a self-adjoint element $a \in
\A $ such that the following hold:
\item [(i)] $\| a \| < \epsilon.$
\item  [(ii)]$\sum_{k=1}^{\infty} q_k$ converges in the strict topology of
$\Mul(\A ).$
\item  [(iii)]$A = A_0 + \sum_{k=1}^{\infty} \lambda_k q_k + a.$
\item  [(iv)] $\underset{k \to \infty} {\lim } \lambda_k = \| A
\|_{ess}$.
 \label{lem:diagonalize}
\end{lem}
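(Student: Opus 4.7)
\noindent Let $\mu := \|A\|_{ess}$; note $\mu>0$ since $A\notin\A$. The plan is to apply Lemma~\ref{lem:threediagonals} to obtain $A=A_1+A_2+A_3+a$ with $\|a\|<\epsilon$ and block-diagonal $A_j$'s, then via Lemma~\ref{lem:rr0} to extract a single projection $q_i$ from each block $A_1^{(i)}$ with coefficient $\lambda_i$ close to $\mu$. The indices $n_i$ feeding into Lemma~\ref{lem:threediagonals} must be chosen to simultaneously keep the error $a$ small \emph{and} force the block norms of $A_1$ to approach $\mu$.

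Fix an increasing approximate identity of projections $\{e_n\}\subset\A$ (available by \cite{Zhang5}). Since $a_n := A^{1/2}e_n A^{1/2}\in\A^+$ is monotone increasing and converges strictly to $A$, Lemma~\ref{L:ess norm}(ii) gives $\|(I-e_n)A(I-e_n)\|=\|A-a_n\|\downarrow\mu$. For fixed $n$, a vector-state argument in a faithful nondegenerate representation of $\A$ shows that $\|(e_m-e_n)A(e_m-e_n)\|$ is monotone increasing in $m$ with supremum $\|(I-e_n)A(I-e_n)\|$, and so can be made arbitrarily close to $\mu$. These two facts allow an inductive selection $n_0=0<n_1<n_2<\cdots$ satisfying (a) $\|(e_{n_i}-e_{n_{i-1}})A^{1/2}(I-e_{n_{i+1}})\|\le \eta_i$ for positive $\eta_i\to 0$ with $\sum\eta_i^2$ small enough that the Lemma~\ref{lem:threediagonals} construction yields $\|a\|<\epsilon$; and (b) for the support projections $r_i$ of the forthcoming blocks $A_1^{(i)}$, $\|r_iAr_i\|$ lies within $\delta_i$ of $\mu$ with $\delta_i\to 0$. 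Each condition only requires that the next index be chosen sufficiently large, and so they can be interleaved.

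Applying Lemma~\ref{lem:threediagonals} with these indices yields $A=A_1+A_2+A_3+a$ with $\|a\|<\epsilon$, $A_1=\sum_i A_1^{(i)}$, $A_1^{(i)} = X r_i X \in r_i\A r_i$ and $X=A^{1/2}-a_0$. The key refinement is that $r_i$ annihilates all but finitely many summands of $a_0$ (only those $T_k$ with $k$ close to $i$ contribute), so the choice $\eta_i\to 0$ forces $\|r_i a_0\|\to 0$. Expanding $A_1^{(i)} = A^{1/2}r_iA^{1/2} - A^{1/2}r_ia_0 - a_0r_iA^{1/2} + a_0r_ia_0$ and using $\|A^{1/2}r_iA^{1/2}\| = \|r_iAr_i\|$ gives $\|A_1^{(i)}\| = \|r_iAr_i\| + o(1) \to \mu$ by (b). For each $i$ with $A_1^{(i)}\neq 0$, apply Lemma~\ref{lem:rr0} to $A_1^{(i)}\in\A^+$ with tolerance $\epsilon_i\to 0$ to produce pairwise orthogonal projections $p_{i,j}\in r_i\A r_i$ and scalars $\alpha_{i,j}>0$ with $\sum_j\alpha_{i,j}p_{i,j}\le A_1^{(i)}$ and $\|A_1^{(i)}-\sum_j\alpha_{i,j}p_{i,j}\|<\epsilon_i$; choose $j^*(i)$ maximizing $\alpha_{i,\cdot}$ and set $q_i:=p_{i,j^*(i)}$, $\lambda_i:=\alpha_{i,j^*(i)}$. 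Then $\lambda_i q_i\le A_1^{(i)}$ and $\|A_1^{(i)}\|-\epsilon_i\le\lambda_i\le\|A_1^{(i)}\|$, so $\lambda_i\to\mu$. Define $A_0 := A_2+A_3+\sum_i(A_1^{(i)}-\lambda_iq_i)\in\Mul(\A)^+$; the last sum converges strictly by pairwise orthogonality of the corners $r_i\A r_i$. Discarding the finitely many zero $q_i$'s and reindexing gives $A=A_0+\sum_k\lambda_k q_k + a$ with $\sum q_k$ strictly convergent (since $q_k\le r_{i_k}$ and $\sum r_i=I$ strictly).

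The main obstacle is the interleaved inductive construction of the $n_i$ subject to both (a) and (b), together with the sharpening $\|r_ia_0\|\to 0$ beyond the mere smallness of $\|a_0\|$ given by Lemma~\ref{lem:threediagonals}; only this refinement forces $\|A_1^{(i)}\|\to\mu$ (rather than merely $\limsup\le\mu$), and hence the extracted coefficients $\lambda_k$ tend to $\mu$ as required by (iv).
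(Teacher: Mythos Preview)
Your approach has a genuine gap at the point you yourself flag as the main obstacle. The claim that ``$r_i$ annihilates all but finitely many summands of $a_0$'' is false: writing $T_k = (e_{n_k}-e_{n_{k-1}})A^{1/2}(1-e_{n_{k+1}}) + (1-e_{n_{k+1}})A^{1/2}(e_{n_k}-e_{n_{k-1}})$, the second summand satisfies $r_i(1-e_{n_{k+1}}) = r_i$ whenever $n_{k+1}\le n_{3i-3}$, so $r_iT_k = r_iA^{1/2}(e_{n_k}-e_{n_{k-1}})$ is typically nonzero for \emph{all} $k\le 3i-4$. Summing these contributions leaves $r_ia_0$ with a piece $r_iA^{1/2}e_{n_{3i-4}}$ whose norm is not controlled by the single--strip tail bounds $\eta_k$; the hypothesis $\eta_k\to 0$ alone does not force $\|r_ia_0\|\to 0$. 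Without that, the expansion collapses to $\|A_1^{(i)}\| = \|Xp_iX\| = \|p_iX^2p_i\| = \|p_i(A-a)p_i\|$, which lies within the \emph{fixed} error $\|a\|<\epsilon$ of $\|p_iAp_i\|$, so the extracted $\lambda_i$ fall only in $[\mu-\epsilon-o(1),\,\mu+\epsilon]$ and need not converge to $\mu$ as (iv) requires. (There is also a notational slip: with $p_i$ the inner block $e_{n_{3i-2}}-e_{n_{3i-3}}$ one has $A_1^{(i)}=Xp_iX\notin p_i\A p_i$, while with the outer support $r_i=e_{n_{3i}}-e_{n_{3i-3}}$ one has $A_1^{(i)}\in r_i\A r_i$ but $A_1^{(i)}\neq Xr_iX$.)

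The paper sidesteps this entirely by a simpler device: it applies Lemma~\ref{lem:threediagonals} once with \emph{any} indices giving $\|a\|<\epsilon$, and then works with finite chunks of the \emph{full} sum $A_1+A_2+A_3=A-a$ rather than with individual blocks of $A_1$. Since every tail $\sum_{i=1}^3\sum_{j>m}a_{i,j}$ has norm at least $\|A-a\|_{ess}=\mu$, one can for each $k$ choose a chunk $\sum_{i=1}^3\sum_{j=m_k+1}^{n_k}a_{i,j}$ of norm exceeding $\mu-2^{-k}$, apply Lemma~\ref{lem:rr0} to it to extract $q_k$ with $\lambda_k\in(\mu-2^{-k},\mu]$, and leave a gap of at least three block--indices before the next chunk so that successive chunks (hence the $q_k$) are mutually orthogonal. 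No refinement of the indices in Lemma~\ref{lem:threediagonals} and no control of $\|r_ia_0\|$ is needed. Your scheme could be repaired by imposing at each step the stronger cumulative bound $\|e_{n_k}A^{1/2}(1-e_{n_{k+1}})\|\to 0$ (achievable since $e_{n_k}A^{1/2}\in\A$), which does force $\|p_ia_0\|\to 0$; but that is not what you argued.
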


\begin{proof} Applying Lemma \ref{lem:threediagonals},  one has a decomposition
$$A = A_1 + A_2 + A_3 + a$$ where $a \in \A $ is self-adjoint,  $\| a \| < \epsilon$,  $A_1, A_2, A_3 \in
\Mul(\A  )$ are positive element in block-diagonal forms, as
described in the proof of \ref{lem:threediagonals}. Let  $a_{i,j}$
be the  jth-block on the diagonal of $A_i$ for $i=1,2,3$ and $j=1,
2,\cdots$.  All $a_{i,j}$ are positive elements in $ \A  $ and
$\sum_{i = 1}^3 \sum_{j = 1}^{\infty} a_{i,j}= A_1+A_2+A_3$
converges in the strict topology.

We construct by induction a 
 sequence of
positive numbers $\lambda_k $ and mutually orthogonal projections
$q_k \in \A  $ such that for each $k$
\begin{align*}
& \| A \|_{ess}\ge \lambda_k > \| A \|_{ess} -1/2^{k} \\
& \textstyle \sum_{j=1}^k \lambda_kq_k \le  \sum_{i = 1}^3 \sum_{j =
1}^{n_k} a_{i,j},
\end{align*}
where $\{n_k\}$ in an increasing sequence of natural numbers.

   For $k = 1$, since  $$\| \sum_{i = 1}^3 \sum_{j = 1}^{n} a_{i,j} \| \uparrow  \|A-a\|\ge  \|A-a\|_{ess} = \|A\|_{ess},$$
   we can choose an integer $n_1$ such that
$\| \sum_{i = 1}^3 \sum_{j = 1}^{n_1} a_{i,j} \| > \| A \|_{ess} -
1/2$. By  Lemma \ref{lem:rr0} applied to $ \sum_{i = 1}^3 \sum_{j =
1}^{n_1} a_{i,j}\in \A $  one can find an approximation of $ \sum_{i
= 1}^3 \sum_{j = 1}^{n_1} a_{i,j}$ by a positive element of finite
spectrum belonging to $ \A  $,   $\sum_{i=1}^m\alpha _ip_i$, with
$\alpha_i>0$, $p_i$ mutually orthogonal nonzero projections of  $ \A
$, and
 \begin{align*} &\sum_{i=1}^m\alpha _ip_i\le \sum_{i = 1}^3 \sum_{j = 1}^{n_1}
a_{i,j},\\
&\|\sum_{i = 1}^3 \sum_{j = 1}^{n_1} a_{i,j}-\sum_{i=1}^m\alpha
_ip_i\|\le\frac 12 ( \| A \|_{ess} -1/2) \|.\end{align*}  Then at
least one of $\alpha _1, \alpha _2, \cdots, \alpha _m$, say $\alpha
_j$, satisfies
$$\alpha_j > \| A \|_{ess} -1/2.$$
Let $\lambda _1=:\min\{\alpha _j, \|A\|_{ess}\}$ and $q_1=:p_j$, where $q_1$ is a
nonzero projection from $ \A $. Then one has the desired inequality:
$$\lambda_1 q_1 \leq \sum_{i = 1}^3 \sum_{j = 1}^{n_1} a_{i,j}.$$

For $k=2$  take  an integer $m\ge n_1+3$ so that the sum
$\sum_{i=1}^3 \sum_{j = m+1 }^{\infty } a_{i,j}$ is orthogonal to
$\sum_{i = 1}^3 \sum_{j = 1}^{n_1}
a_{i,j}$ in the sense
$$(\sum_{i=1}^3 \sum_{j = m+1 }^{\infty } a_{i,j})(
  \sum_{i = 1}^3 \sum_{j = 1}^{n_1}a_{i,j})= (
 \sum_{i = 1}^3 \sum_{j =
1}^{n_1}a_{i,j})(\sum_{i=1}^3 \sum_{j = m+1 }^{\infty }
a_{i,j})=0.$$ Such an $m$ exists by the construction of $A_1,
A_2, A_3$.

Since $ \| \sum_{i = 1}^3 \sum_{j =m+1}^{n} a_{i,j}\|$  increases to
$\|A-a-  \sum_{i = 1}^3 \sum_{j =1}^{m} a_{i,j}\|  $ and
$$\|A-a-  \sum_{i = 1}^3 \sum_{j =1}^{m} a_{i,j}\| \ge \|A-a-
\sum_{i = 1}^3 \sum_{j =1}^{m} a_{i,j}\|_{ess}=\|A\|_{ess},
$$
repeating the above argument for $k=1$, choose an integer $n_2>m$
such that $$\| \sum_{i = 1}^3 \sum_{j = m+1}^{n_2} a_{i,j} \| > \| A
\|_{ess} - 1/2^2.$$ As for the case $k=1$, one can choose  a nonzero
projection $q_2$   in $ \A  $ and $\lambda_2 > 0$ such that
\begin{align*}&\| A \|_{ess} \ge \lambda_2
> \| A \|_{ess} - 1/ 2^2,\\
&\lambda_2 q_2 \leq \sum_{i=1}^3 \sum_{j = m+1}^{n_2}
a_{i,j}. \end{align*}

Clearly, $q_2q_1=0$ and the obvious inequality  $$\sum_{i=1}^3
\sum_{j = m+1}^{n_2} a_{i,j}\le \sum_{i=1}^3 \sum_{j = n_1+ 1}^{n_2}
a_{i,j}$$ guarantees that $$\lambda _1q_1+\lambda_2q_2 \le \sum_{i=1}^3
\sum_{j =  1}^{n_2} a_{i,j}.$$

Proceeding recursively, one constructs  a
sequence of pairwise orthogonal projections $\{q_k\}$ and a sequence
of positive numbers $\{\lambda _k\}$ with the required properties.

It is now routine to prove
 that $\sum_{k=1}^{\infty}\lambda_k q_k$ and hence also $\sum_{k=1}^{\infty} q_k$ converge in the strict topology of $\Mul(\A
 )$.
 Then setting
$$
A_o= A_1+A_2+A_3- \sum_{k=1}^{\infty}\lambda_k q_k\in  \Mul(\A)
$$
satisfies (i)--(iv).
\end{proof}

   We now reach our second key lemma.

\begin{lem} Let $\A$ be a $\sigma$-unital non-unital
purely infinite simple $C^*$-algebra and $A$   be a positive element
of $ \Mul(\A  )$ such that $\| A \|_{ess} = 1$ and $\| A \| > 1$.
Then there exists a sequence $\{ f_k \}_{k=1}^{\infty}$ of pairwise
orthogonal projections in $\A  $ such that
\item [(i)]  $\sum_{k=1}^{\infty} f_k$ converges in the strict topology
in $\Mul(\A  )$, and
\item [(ii)] $\| f_k A f_k \| > 1$ for all $k$.
\label{lem:technicallemma}
\end{lem}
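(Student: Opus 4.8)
I want to produce a sequence of pairwise orthogonal projections $f_k \in \A$, converging strictly, with $\|f_k A f_k\| > 1$ for each $k$. The natural starting point is the decomposition furnished by Lemma \ref{lem:diagonalize}: since $\|A\|_{ess} = 1$, I can write
$$
A = A_0 + \sum_{k=1}^{\infty} \lambda_k q_k + a,
$$
where the $q_k$ are pairwise orthogonal nonzero projections in $\A$, the sum converges strictly, $a$ is self-adjoint with $\|a\| < \epsilon$ (for a small $\epsilon$ to be chosen), $A_0 \geq 0$, and $\lambda_k \to \|A\|_{ess} = 1$. The difficulty is immediate: because the limit of the $\lambda_k$ is exactly $1$ rather than strictly above $1$, the compressions $q_k A q_k$ need \emph{not} have norm above $1$; each $\lambda_k q_k$ sits just at or below the critical value. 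So the raw diagonal projections $q_k$ are not good enough, and I must enlarge or combine them to push the compression norm strictly past $1$.

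\medskip

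\noindent\textbf{Key steps.} First I would exploit the hypothesis $\|A\| > 1$ through Lemma \ref{lem:bigp}: there is a fixed $\lambda > 1$ and a nonzero projection $p \in \A$ with $\lambda p \leq A$ and $p A' = 0$ for the residual part $A'$ of essential norm $1$. This gives me a single ``seed'' corner where the compression of $A$ is genuinely larger than $1$. The strategy is then to manufacture, for each $k$, a projection $f_k$ that is large enough on the $\lambda_k q_k$ part (where the value is close to $1$ from below) \emph{combined with} a small piece equivalent to a subprojection of $p$ (where the value exceeds $\lambda > 1$), so that the combined compression has norm strictly above $1$. Here I lean on pure infiniteness and simplicity of $\A$: in each corner $q_k \A q_k$ I can find subprojections equivalent to subprojections of $p$, and by the divisibility/comparison technology used in Lemma \ref{lem:new} and Lemma \ref{lem:finitespectrum}, I can arrange these pieces to be mutually orthogonal across different $k$ and orthogonal to a tail of the $q_k$'s so that strict convergence of $\sum f_k$ is preserved. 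Concretely, for each $k$ I would choose $f_k$ supported on a block $\sum_{j \in I_k} q_j$ (with the index sets $I_k$ disjoint and exhausting a tail) together with a copy $p_k \sim$ (subprojection of $p$), and estimate $\|f_k A f_k\|$ from below by testing on a suitable unit vector in the $p_k$-corner where the value is at least $\lambda$, minus an error controlled by $\|a\| < \epsilon$ and by $A_0$. Taking $\epsilon$ small relative to $\lambda - 1$ then forces $\|f_k A f_k\| > 1$.

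\medskip

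\noindent\textbf{Main obstacle.} The hard part will be the geometric bookkeeping that simultaneously achieves three competing demands: the $f_k$ must be pairwise orthogonal, their sum must converge strictly (so each $f_k$ must eventually live high up in the approximate-identity filtration, beyond $e_{n}$ for growing $n$), and yet each must capture enough of the $\lambda > 1$ corner from the single fixed projection $p$ to beat the threshold. Since $p$ is one fixed finite piece while I need infinitely many mutually orthogonal copies of (subprojections of) $p$, I must invoke pure infiniteness to replicate $p$ infinitely often inside corners $q_k \A q_k$ lying arbitrarily far out in the filtration --- this is exactly where simplicity plus pure infiniteness (as flagged in Remark \ref{R:sums}) is indispensable, since it guarantees that every nonzero corner contains a projection equivalent to (a subprojection of) $p$. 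Once the replication and orthogonalization are set up, the norm estimate $\|f_k A f_k\| \geq \lambda - O(\epsilon) > 1$ is routine, and strict convergence of $\sum f_k$ follows because the $f_k$ are eventually orthogonal to any fixed $e_n$.
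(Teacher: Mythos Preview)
Your proposal contains a genuine gap at the crucial step. You plan to place, inside each distant corner $q_k\A q_k$, a projection $p_k$ that is Murray--von Neumann equivalent to (a subprojection of) the seed projection $p$, and then to ``test on a suitable unit vector in the $p_k$-corner where the value is at least $\lambda$.'' But equivalence does not transport the order relation with $A$: the inequality $\lambda p\le A$ is a statement about where $p$ sits, not about its equivalence class. If $p_k\le q_k$ then $p_kAp_k$ is controlled by the diagonal term $\lambda_k q_k$ (plus the small $a$ and the unknown $A_0$), and $\lambda_k$ is only close to $1$, not above it; the estimate $\|f_kAf_k\|\ge\lambda-O(\epsilon)$ simply does not follow. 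The alternative reading---that each $p_k$ is an honest subprojection of the fixed $p\in\A$---fails for a different reason: if every $f_k$ dominates a nonzero $p_k\le p$, then testing strict convergence of $\sum f_k$ against the element $p\in\A$ forces $\|\sum_{k\ge n}p_k\|\to 0$, which is impossible for nonzero projections.

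The paper's proof resolves exactly this tension, and the missing idea is a \emph{unitary mixing} rather than a replication. After arranging (via Lemmas~\ref{lem:bigp} and~\ref{lem:diagonalize}, much as you outline) pairwise orthogonal, mutually equivalent projections $q_0,q_1,\ldots\in\A$ with $A\ge\lambda_0 q_0+\sum_{k\ge1}\lambda_kq_k+(\text{small error})$, $\lambda_0>1$, $\lambda_k\to1$ rapidly, one embeds $\mathbb{B}(l_2)$ unitally into $(\sum q_k)\Mul(\A)(\sum q_k)$ and conjugates by an explicit unitary $u\in\mathbb{B}(l_2)$ whose action sends the diagonal $(1+t,1,1,\ldots)$ to one with $k$-th entry $1+t/2^{k+1}$. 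Setting $f_k:=\rho(u^*e_ku)$, each $f_k$ overlaps $q_0$ itself (with weight $|u_{k,0}|^2=1/2^{k+1}$, so these overlaps are ``fractional'' and sum to $1$ rather than being orthogonal subprojections of $q_0$), and a direct matrix computation then gives $\|f_kAf_k\|>1$. Pure infiniteness is used only to make the $q_k$ mutually equivalent so that the $\mathbb{B}(l_2)$-embedding exists; the norm bound comes from the spreading action of $u$, not from comparison of projections.
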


\begin{proof} By Lemma \ref{lem:bigp} there is a positive element $A' $ of $\Mul(\A  )$
 with $\| A' \|_{ess} =1$, a projection $q_0$ of $ \A$ with $q_0A'  = A' q_0  = 0,$ and a scalar $\lambda_0>1$ such that
$$A_{0}:= A-A' - \lambda_0 q_0\ge 0.$$

By Lemma \ref{lem:diagonalize} applied to $A'$ and $\epsilon=1$,
there is  a sequence of positive real numbers $ \lambda'_k \to 1$,  a sequence of
pairwise orthogonal nonzero projections $\{ q'_k \}_{k = 1}^{\infty}\in\A  $,  a self adjoint
element $a=a^*\in \A$, and  a  positive element  $ A'_0\in \Mul(\A
)$ such that
$$ A'=A'_0 + \sum_{k=1}^{\infty} \lambda'_k q'_k   + a.$$
 Notice that we can choose
$q_0q'_k=q'_kq_0=0$ for all $k$ because $q_0A'=A'q_0=0$ (just replace 
$\A$ with $(1 - q_0) \A (1 - q_0)$ when applying Lemma 
\ref{lem:diagonalize}).

Choose a subsequence $\lambda_k:=\lambda'_{n_k}$
such that
\begin{equation}\label{e:lambda}| \lambda_k- 1 | < (\lambda_0 - 1)/ 4^{k+1}.\end{equation}
Since   $\sum_{k \geq 1} q'_{n_k}$ still converges in the strict
topology of $\Mul(\A )$,
 by passing if necessary to a subsequence,   one can further assume that
\begin{equation}\label{e:apm}\max\{\| (a_+)^{1/2} q'_{n_k} \|,\| (a_-)^{1/2} q'_{n_k} \| \}
 < \frac { \sqrt{\lambda_0-1} (\sqrt{2} - 1) } {2^{k +1}\sqrt{2}
 },\end{equation}
where $a_-$ and $a_+$ denote the negative and positive parts of $a$,
respectively. Since $\A$ is  simple and purely infinite, for each $k\ge 1$  there exists a subprojection $q_k$ of $q'_{n_k}$ such that
$q_k\sim q_0$.

Set \begin{align*} q&: =\sum_{k=0}^{\infty} q_k\\
 A_{00}&= A_0+A_0'+ \sum_{i\not\in \{n_ k\}} \lambda
_iq'_i+\sum_{k=1}^\infty \lambda' _{n_k} (q'_{n_k}-q_k).\end{align*}
Then $q$ is a projection of $\Mul(\A)$ (as a direct sum of countably
many mutually orthogonal copies of $q_0$), $A_{00}$ is a positive element of $\Mul(\A
)$,  and
\begin{equation}\label{e:A}A =  A_{00}+ \sum_{k=0}^{\infty} \lambda_k q_k   + a.\end{equation}
 Let $\{e_i\}_{i=0}^\infty$
be the sequence of mutually orthogonal rank-one projections in
 $\mathbb{B}( l_2 )$
corresponding to the standard basis of $\ell_2$ and let $\rho $ be
a unital (isometrical) $*$-embedding $\mathbb{B}( l_2 )
\rightarrow q \Mul(\A ) q$    for which
$$\rho(e_i) = q_i \ \ \text{
for all}\ \ \ i \geq 0.$$

It is easy to verify that the following matrix $u$ is unitary
\[\left(\begin{array}{ccccccccc}
1/\sqrt{2} & -1/\sqrt{2} & 0 & ....\\
(1/\sqrt{2})^2 & (1/\sqrt{2})^2 & -1/\sqrt{2} & 0 & ...\\
......\\
......\\
(1/\sqrt{2})^n & (1/\sqrt{2})^n & (1/\sqrt{2})^{n-1} & ... &
(1/\sqrt{2})^3 & (1/\sqrt{2})^2 & -1/\sqrt{2} & 0 & ....\\
.... \\
\end{array} \right)
\]
and thus $\rho(u)$ is a unitary element  of $q\Mul(\A )q$.
Define for all $k\ge0 $ $$f_k =:\rho( u^* e_k u).$$
Then $\{ f_k \}_{k =0}^{\infty}$ is a
sequence of pairwise orthogonal (equivalent) projections in $\A $ and $$\sum_{k=0}^\infty f_k = \rho (u) \big(\sum_{k=0}^\infty
q_k\big)\rho(u)^* = q.$$
Since $A_{00}\ge 0$ it follows from (\ref {e:A}) that
\begin{eqnarray}\label{e:5*}
 \| f_k A f_k \|
& \geq & \| f_k ( \sum_{j =0}^\infty \lambda_j q_j ) f_k \| -
\| f_k a f_k \| \notag \\
& = & \| \rho\Big (u^* e_k u \big(\sum_{j =0}^\infty\lambda_j e_j\big) u^* e_k u\Big) \| - \| f_k a f_k \| \notag \\
& = & \| e_k u\big( \sum_{j =0}^\infty \lambda_j e_j\big) u^* e_k \| -
 \| f_k a f_k \|\notag \\
& = & \Big( u \big(\sum_{j =0}^\infty \lambda_j e_j\big) u^*\Big)_{k,k} -
\| f_k a f_k \|.\notag
\end{eqnarray}
\emph{Claim 1:} $ \Big( u\big(\sum_{j =0}^\infty \lambda_j e_j\big) u^*\Big)_{k,k}
    > 1 +  (\lambda_0 - 1)/2^{k + 2}$ for all $k$.\\
For ease of computations, notice that
\begin{equation}\label{e:u}
u_{i,j} =
\begin{cases}
(1/\sqrt{2})^{i+1} & j = 0 \\
(1/\sqrt{2})^{i + 2 - j} & 1 \leq j \leq i \\
- 1/ \sqrt{2} & j = i + 1\\
0 & j > i+1
\end{cases}
\end{equation}
and  thus for all $i,j \geq 0$,
\begin{align}\label{e: 4*}
(u e_{0} u^*)_{i,i} &=|u_{i,0}|^2= 1/ 2^{i +1 }\\
| u_{i,j} |& \leq (1/ \sqrt{2})^{i + 1 - j}\label{e: 5}
\end{align}

Then
\begin{alignat*}{2}
\Big( u\big(\sum_{j =0}^\infty \lambda_j e_j\big) u^*\Big)_{k,k}& =\Big( u\big( I+ (\lambda_0 - 1) e_0   + \sum_{j = 1}^\infty (\lambda_j - 1) e_j\big) u^*\Big)_{k,k}\\
& =  1+(\lambda_0 - 1)/ 2^{k+1}-| \sum_{i=1}^{k + 1} u_{k,i} (\lambda_i - 1) \overline{u_{k,i}} |  \qquad &(\text {by (\ref {e:u})}) \\
& \ge  1+(\lambda_0 - 1)/ 2^{k+1}-\sum_{i=1}^{k+1} | u_{k,i} |^2 | \lambda_i - 1 |  \\
& >  1+(\lambda_0 - 1)/ 2^{k+1}-\sum_{i=1}^{k+1} (1/ 2)^{k + 1 - i}
(\lambda_0 - 1) / 4^{i+1}  \qquad &(\text {by (\ref {e:lambda}) and (\ref {e: 5}}) \\
& >  1+(\lambda_0 - 1)/ 2^{k+1}-\frac{(\lambda_0 - 1)}{2^{k+3}}\\
&> 1 + (\lambda_0 - 1)/ 2^{k +2},
\end{alignat*}
which proves Claim 1.

\medskip
\noindent \emph{Claim 2:}  $\| f_k a f_k \| < (\lambda_0 - 1)/ 2^{k +2}$ for all $k$.

Indeed,
\begin{alignat*}{2}
 \| f_k a f_k \|
 & =  \| \rho(u^* e_k u) a \rho(u^*e_k u)\| \\
 & =  \|q_k \rho(u) a \rho(u)^*q_k \|\qquad &(\text{since $\rho(u)$ is unitary}) \\
& =  \|  q_k\rho(u)\big(\sum_{i = 0}^{\infty }q_i \big) a\big(\sum_{j = 0}^{\infty }q_j \big) \rho(u^*)q_k \| &(\text{since $\rho(u)=q\rho(u)=\rho(u)q$}) \\
& =  \| \sum_{i,j = 0}^{k+1} q_k\rho(u)q_i aq_j \rho(u^*)q_k \| &(\text{by \ref {e:u})}) \\
& \leq  \sum_{i,j=0}^{k+1} \|q_k \rho(u)q_i \|\, \|q_j \rho(u^*)q_k \|\,
\| q_i a q_j \| \\
& = \sum_{i,j=0}^{k+1} | u_{k,i}|\,|u_{k,j} | \, \| q_i a q_j \| \\
&\le   \sum_{i,j=0}^{k+1} | u_{k,i}|\,|u_{k,j} | \, ( \| q_i a_+ q_j \| + \| q_i a_- q_j \|) &(\text{since $a=a_+-a_-$)}) \\
& \le  \sum_{i,j=0}^{k+1} (1/\sqrt{2})^{k + 1 - i} (1/\sqrt{2})^{k + 1 -j}
 \frac { (\lambda_0-1) (\sqrt 2 - 1)^2 } {2^{i+j +2}} &(\text{by (\ref {e: 5}) and (\ref {e:apm})}) \\
&= \frac { (\lambda_0-1) (\sqrt 2 - 1)^2 } {2^{k +3}}  \sum_{i=0}^{k+1} (1/\sqrt{2})^i\sum_{j=0}^{k+1}(1/ \sqrt{2})^j\\
&= \frac { (\lambda_0-1) (\sqrt 2 - 1)^2 } {2^{k +3}} \frac{2}{(\sqrt 2 -1)^2}\big (1-(\frac{1}{\sqrt 2})^{k+2}\big)^2\\
& <  (\lambda_0 - 1)/ 2^{k +2},
\end{alignat*}
which proves Claim 2.

    From (\ref {e:5*}), Claim 1 and Claim 2, we see that for all $k$,
$$\| f_k A f_k \| > 1 + (\lambda_0 - 1)/ 2^{k +2} - (\lambda_0 - 1)/ 2^{k +2}
 = 1.$$
\end{proof}

\begin{rem}\label{keylemma}
The key idea in the proof for Lemma 4.4 above is that by acting
within a copy of $\mathbb{B}( l_2 ) $ that is identified with the
corner $q \Mul(\A )q$ of $\Mul(\A )$, the unitary matrix $u$ permits
to turn the diagonal operator $\sum_{k=0}^\infty \lambda _k q_k$
which has one entry larger than 1 and all the other entries ``close"
to 1 into an operator with all the diagonal entries strictly
larger than 1. This ``spreading out" action of $u$ can be
illustrated directly in $\mathbb{B}( l_2 ) $ by showing that if for
some $t>0$ we set $D:=I+te_o\in \mathbb{B}( l_2 )$, i.e., the
diagonal operator with diagonal sequence $$< 1+t, 1, 1,1,\cdots >,$$
then the diagonal sequence of $uDu^*$ is
$$< 1+1/2t, 1+1/2^2t, \cdots 1+ 1/2^nt,\cdots>$$
where indeed each diagonal entry is  larger than 1.

\end{rem}

 \begin{prop}\label{P:ess=1}
Let $\A$ be a $\sigma$-unital non-unital   purely infinite simple
$C^*$-algebra and let $A\in \Mul(\A)^+$ with $\|A\|_{ess}=1$ and $
\|A\|>1$. Then $A$ is a strict sum of projections.
 \end{prop}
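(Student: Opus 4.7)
The strategy is to reduce the problem to Remark \ref{R:sums}, i.e., to write $A = \sum_{k=1}^\infty a_k$ strictly with each $a_k \in \A^+$ satisfying $\|a_k\| > 1$.

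First, apply Lemma \ref{lem:technicallemma} to obtain pairwise orthogonal projections $\{f_k\}_{k=1}^\infty \subset \A$ with $F := \sum_k f_k$ strictly convergent in $\Mul(\A)$ and $\|f_k A f_k\| > 1$ for every $k$. The na\"ive choice $a_k := A^{1/2} f_k A^{1/2}$ would succeed were $F = I$, since then $\sum_k A^{1/2} f_k A^{1/2} \to A$ strictly with each term in $\A^+$ of norm $\|f_k A f_k\| > 1$. In general, however, $F$ may be a proper subprojection of $I$, and the shortfall $A^{1/2}(I - F)A^{1/2}$ must be supplied.

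To do so, I would construct orthogonal projections $\{g_k\}_{k=1}^\infty \subset (I - F)\A(I - F) \subseteq \A$ with $\sum_k g_k = I - F$ strictly in $\Mul(\A)$. The corner $(I-F)\A(I-F)$ is hereditary in $\A$, hence simple and purely infinite; moreover, for any approximate unit of projections $\{e_n\}$ of $\A$ (cf. \cite{Zhang5}), the elements $(I-F) e_n (I-F)$ form an increasing countable approximate unit for the corner, so $(I-F)\A(I-F)$ is itself $\sigma$-unital. Another appeal to \cite{Zhang5} then yields an increasing approximate unit of projections $\{h_n\}$ for this corner, and setting $g_n := h_n - h_{n-1}$ (with a mild refinement using pure infiniteness to ensure infinitely many non-zero terms) gives the desired orthogonal sequence. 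The strict convergence $\sum g_n \to I - F$ in $\Mul(\A)$ follows from $\|h_N (I-F) a - (I-F)a\| \to 0$ for every $a \in \A$, which in turn follows from the fact that $yy^* = (I-F)aa^*(I-F)$ lies in $(I-F)\A(I-F)$ and $h_N$ is an approximate unit on this corner. (If $F = I$, one simply sets $g_k = 0$.)

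Now define $p_k := f_k + g_k$. These are pairwise orthogonal projections in $\A$ with $\sum p_k = I$ strictly, so $a_k := A^{1/2} p_k A^{1/2} \in \A^+$ satisfies $\sum_k a_k = A$ strictly. Since $f_k \le p_k$, we have $f_k A f_k = f_k (p_k A p_k) f_k$ as a compression of $p_k A p_k$; therefore $\|a_k\| = \|p_k A p_k\| \ge \|f_k A f_k\| > 1$. An appeal to Remark \ref{R:sums} then completes the proof. The principal technical obstacle is the construction of the $g_k$ and the verification that their strict convergence in $\Mul((I-F)\A(I-F))$ lifts to strict convergence in $\Mul(\A)$; once this is in hand, the remaining steps are essentially formal.
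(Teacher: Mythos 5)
Your proposal is correct and follows essentially the same route as the paper: invoke Lemma \ref{lem:technicallemma} to produce the $f_k$, fold the leftover $A^{1/2}(I-F)A^{1/2}$ into the terms $A^{1/2}f_kA^{1/2}$ so that each summand lies in $\A^+$ with norm exceeding $1$, and conclude via Remark \ref{R:sums}. The only (immaterial) difference is that the paper sidesteps your ``principal technical obstacle'' by taking an arbitrary approximate identity of projections $\{g_k\}$ for $\A$ itself and using the positive elements $A^{1/2}(I-F)g_k(I-F)A^{1/2}$, rather than decomposing $I-F$ into orthogonal projections of the corner $(I-F)\A(I-F)$.
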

 \begin{proof}
 Since $\| A \|_{ess} = 1$ and $\| A \| > 1$, there exists a sequence
$\{ f_k \}_{k=0}^{\infty}$  of pairwise orthogonal projections in
$\A  $ satisfying all conditions of Lemma \ref{lem:technicallemma}.

Let $p = \sum_{k=0}^{\infty} f_k$. Then $p$ is a projection of $ \Mul(\A )$. Rewrite $A$ as
$$A = A^{1/2}(1 - p)A^{1/2} + \sum_{k=0}^{\infty} A^{1/2} f_k A^{1/2}$$
where the sum converges in the strict topology in $\Mul(\A )$ and
 $\| A^{1/2} f_k A^{1/2} \| > 1$ for all $k \geq 0$ by condition (iii)
 of Lemma  \ref{lem:technicallemma}. Now choose a sequence of mutually orthogonal
 projections $g_k$ of $\A$ whose sum converges to the identity.
 Then \\$A^{1/2} (1-p)g_k(1-p) A^{1/2} \in \A^+$ for every $k$ and $$A^{1/2}(1 - p)A^{1/2} =
 \sum_{k=1}^\infty A^{1/2} (1-p)g_k(1-p) A^{1/2}.$$ in the strict topology.
 Let  $$a_k:= A^{1/2} (1-p)g_k(1-p) A^{1/2}+ A^{1/2} f_k A^{1/2}.$$
 Then $a_k\in \A^+$, $\|a_k\|\ge \|A^{1/2} f_k A^{1/2}\|> 1$ for al $k$ and
 $$
 A= \sum_{k=1}^\infty a_k
 $$
 in the strict topology. By Remark \ref {R:sums}, $A$ is a strict sum of projections.
 \end{proof}

This provides the last substantial step in the proof of our main theorem.
\begin{proof}[{\bf Proof of Theorem \ref {T:main}}]

First, the necessity. If $A$ is a strict sum of projections belonging to $\A$, then either the number of projections is finite, in which case $A\in \A$ (case (iv)), or it is infinite, in which case $\| A \|_{ess} \ge 1$ by  Lemma \ref {L:ess norm}. It is clear that $\| A \|  \geq 1$. If $\| A \|  = 1$, then $A$ must be itself a projection (case (iii)). Indeed it is well known that if $p, q$ are two projections and $\| p + q \| = 1$ then $p$ and $q$ must be orthogonal. Finally, if $\| A \|  > 1$ then we can have either $\| A \|_{ess} > 1$ (case (i)) or $\| A \|_{ess} =1$ (case(ii).)

Now the sufficiency.
\item[(i)] Proposition \ref {P:ess>1}
\item[(ii)] Proposition  \ref {P:ess=1}
\item[(iii)] If $A=p\in \Mul(\A))$ is a projections, then $p\A p$ has an increasing approximate identity of projections (\cite{Zhang5}), say $f_n$, and hence, $A=p=\sum
_{n=1}^\infty (f_n-f_{n-1})$ as a strict sum of projections of $\A $
(where $f_0$).
\item[(iv)] Nothing to prove.
\end{proof}

\begin{rem}\label{final}
We would like to  point  out that due to \cite[Theorem 1.1] {B(H)case}, the operator $A=I+ (1/2)e\in
 \mathbb{B}( l_2 ) $, where $e$ is a rank one projection, cannot be written as a  strongly convergent sum of projections in $\mathbb{B}( l_2 ) $. However, if we unitarily embed $ \mathbb{B}( l_2 ) $
 in the multiplier algebra $\Mul(\A )$ where $\A $ is a $\sigma$-unital,
nonunital purely infinite simple C*-algebra, then A can be written as a strictly convergent sum of projections in $\A $. This is due to the much richer structure of  $\Mul(\A )$ than  $\mathbb{B}( l_2 )$.
\end{rem}



\end{document}